\newtheorem{theorem}{Theorem}[section]
\newtheorem{proposition}[theorem]{Proposition}
\newtheorem{corollary}[theorem]{Corollary}
\newtheorem{lemma}[theorem]{Lemma}
\theoremstyle{definition}
 \newtheorem{definition}[theorem]{Definition}
\newtheorem*{definition*}{Definition}
\theoremstyle{remark}
\newtheorem{remark}[theorem]{Remark}
\numberwithin{equation}{section}
\newcommand{\Th}{\tilde{h}}
\def\CC{\mathbb{C}}
\def\RR{\mathbb{R}}
\def\ZZ{\mathbb{Z}}
\renewcommand\SS{\mathbb{S}}
\newcommand{\cR}{{\mathcal R}}
\newcommand\minus\backslash
\newcommand{\id}{{\rm id}}
\newcommand\lan\langle
\newcommand\ran\rangle
\newcommand{\I}{{\mathrm i}}
\newcommand{\e}{{e}}
\newcommand{\dd}{{\mathrm d}}
\DeclareMathOperator\Div{div}
\renewcommand\leq\leqslant
\renewcommand\geq\geqslant
\newlength{\intwidth}
 \DeclareMathOperator\curl{curl}
\begin{document}

\title[Legendrian and electromagnetic structures]{The topology of stable electromagnetic structures and Legendrian fields on the $3$-sphere}

\author{Benjamin Bode}
\address{Instituto de Ciencias Matem\'aticas, Consejo Superior de
  Investigaciones Cient\'\i ficas, 28049 Madrid, Spain}
\email{benjamin.bode@icmat.es, dperalta@icmat.es}

\author{Daniel Peralta-Salas}

%
%
\begin{abstract}
Null solutions to Maxwell's equations in free space have the property that the topology of the electric and magnetic lines is preserved for all time. In this article we connect the study of a particularly relevant class of null solutions (related to the Hopf fibration) with the existence of pairs of volume preserving Legendrian fields with respect to the standard contact structure on the 3-sphere. Exploiting this connection, we prove that a Legendrian link can be realized as a set of closed orbits of a non-vanishing Legendrian field corresponding to the electric or magnetic part of a null solution if and only if each of its components has vanishing rotation number. Moreover, we prove that any foliation by circles (a Seifert foliation) of $\SS^3$ is isotopic to the foliation defined by a volume preserving Legendrian field with respect to the standard contact structure. We also construct a new null solution to the Maxwell's equations with the property that every positive torus knot is a closed electric line of its electric field. Finally, we prove that any (possibly knotted) toroidal surface in $\RR^3$ can be realized as a magnetic surface of a null solution to Maxwell's equations, thus implying its stability for all times. In particular, the associated volume preserving Legendrian field on $\SS^3$ exhibits a positive volume set of invariant tori of the same topological type.
\end{abstract}
\maketitle

\section{Introduction}

The time evolution of an electromagnetic field in free space is described by Maxwell's equations in $\RR^3$:
\begin{alignat}{2}
&\partial_t E=\curl B\,, \qquad &&\partial_t B=-\curl E\\
&\Div E=0\,, \qquad &&\Div B=0\,,
\end{alignat}
where we have set all constants equal to~$1$. Here, $E$ and $B$ are time-dependent vector fields that stand for the electric and magnetic part of the electromagnetic field. As usual, $\curl$ and $\Div$ denote the rotor and divergence of a vector field in $\RR^3$, respectively.

The integral curves of the electric field at fixed time $t$, i.e., the solutions to the autonomous ODE
\begin{align*}
\frac{dx(\tau)}{d\tau}=E(x(\tau),t)
\end{align*}
for some initial condition $x(0)=x_0$, are the \emph{electric lines} at time $t$. Analogously, the \emph{magnetic lines} are the integral curves of the magnetic field at each fixed time~$t$. In general, the electric and magnetic lines change their topology and bifurcate during the time evolution~\cite{ABT17}. However, for a particularly relevant class of solutions to Maxwell's equations, the so-called \emph{null solutions}, the electric and magnetic lines preserve their topology for all time.

\subsection{Stable electromagnetic structures}\label{S.null} We recall that an electromagnetic field $E(x,t),B(x,t)$ is a null solution to Maxwell's equations, if the electric and magnetic fields are orthogonal and have the same modulus, i.e., $E\cdot B=0$ and $|E|=|B|$, for all $t$. Then the time evolution of the electric and magnetic lines is given by a $1$-parameter family of diffeomorphisms $\Phi_t:\mathbb{R}^3\to\mathbb{R}^3$, where $\Phi_0$ is equal to the identity. More precisely, if the normalized Poynting field
\[
P:=\frac{E\times B}{W}
\]
is smooth everywhere, with $W:=\frac12(E^2+B^2)$ the energy density, it follows~\cite{Newcomb,kbbpsi,kpsi} that $E$ and $B$ can be written in terms of the initial electric and magnetic fields $E_0,B_0$ as
\begin{align*}
E(t)=h(t)\Phi_{t*}E_0\,,\qquad B(t)=h(t)\Phi_{t*}B_0\,,
\end{align*}
where $\Phi_{t*}$ is the push-forward of the non-autonomous flow defined by $P$, and $h$ is an explicit (in terms of $W$) non-vanishing function whose expression is not relevant for our purposes. A striking conclusion of this property is that any topological electromagnetic structure, such as knotted closed field lines or invariant tori, that $E$ or $B$ may contain at time $t=0$ is preserved for all time if the corresponding solution to Maxwell's equations is null. We shall then say that the topological structures of the initial electromagnetic field are \emph{stable} under time evolution.

In view of the previous discussion, the study of stable topological structures in electromagnetic fields is thus strongly connected to the study of null solutions to Maxwell's equations. A remarkable construction of such null fields is due to Bateman~\cite{bateman}. Roughly speaking, Bateman showed that given a map $(\alpha,\beta):\mathbb{R}^{3}\times\RR\to\mathbb{C}^2$, referred to as a pair of \textit{Bateman variables}, satisfying a certain PDE and any holomorphic function $h:\mathbb{C}^2\to\mathbb{C}$ there is a recipe to obtain a null electromagnetic field; the details of this construction will be reviewed in Section~\ref{sec:bateman}. A particular choice of the functions $(\alpha,\beta)$ and $h$ yields a solution to Maxwell's equations that is called the \emph{Hopfion}, which is intimately linked to the Hopf fibration on the 3-sphere $\SS^3$. In this solution any field line is closed and any pair of field lines forms a Hopf link, for all time. This important solution was first found by Synge~\cite{synge}, but its topological content was unveiled by Ra{\~n}ada in~\cite{R,R2} following a totally different method. It has been object of intense research during the last $25$ years, including a proposal for experimental realization~\cite{cameron}. Generalizations of the Hopfion were obtained in~\cite{kbbpsi}, where the authors constructed null solutions (following Bateman's method) with electric and magnetic lines in the shape of arbitrary torus knots. These solutions are of \textit{Hopf type}, i.e., they are constructed with the same choice of $(\alpha,\beta)$ as the Hopfion, but with different choices of the holomorphic function $h$. The interested reader may consult~\cite{ABT17} for a comprehensive account on the subject. As an aside remark, we mention that in~\cite{kpsi} the initial electromagnetic fields $E_0,B_0$ that yield null solutions under the evolution of Maxwell's equations were characterized. However it is very hard to use such a characterization effectively, so Bateman's construction remains as the only effective technique to construct null electromagnetic fields.

\subsection{A connection with contact geometry}
In spite of the clarity of Bateman's method, the construction of null electromagnetic fields encoding knots and links that are not of torus type is very challenging, and has been solved only recently~\cite{bode}. The key observation to achieve this result is a surprising connection between null solutions and contact geometry. Specifically, it was proved that any Bateman field of Hopf type is related to a pair of \emph{Legendrian vector fields} with respect to the standard contact structure on $\SS^3$ given by the kernel of the standard contact 1-form
\begin{equation}\label{eq:std}
    -y_1\dd x_1+x_1\dd y_1-y_2\dd x_2+x_2 \dd y_2,
\end{equation}
where we use $z_i=x_i+\I y_i$, $i=1,2$, as coordinates on $\mathbb{C}^2$. These two Legendrian fields are real multiples of pushforwards of the electric and the magnetic field on $\RR^3$ at time $t=0$ by the Bateman variables $(\alpha,\beta)|_{t=0}$. Throughout this paper, $\SS^3$ will be defined as the unit sphere in $\CC^2\cong \RR^4$, i.e., $\SS^3=\{(z_1,z_2)\in\CC^2:|z_1|^2+|z_2|^2=1\}$, and is endowed with the standard volume form, which is precisely the volume compatible with the standard contact form~\eqref{eq:std}.

In contrast with the usual flexibility in contact geometry, we emphasize that, in this work, isotopies of the standard contact structure are not allowed, unless they leave the kernel of the standard contact form invariant. Accordingly, by Legendrian we shall always mean an object (vector field, curve, ...) that is tangent to the kernel of the $1$-form~\eqref{eq:std}.

The goal of this article is to develop and exploit the aforementioned relation between stable electromagnetic structures (via null solutions) and Legendrian vector fields with respect to the standard contact structure. This connection is beneficial in both directions. On the one hand, it allows us to obtain electromagnetic Bateman solutions by studying Legendrian vector fields on $\SS^3$; conversely, it provides a way to construct Legendrian fields with prescribed topological structures, such as knotted periodic orbits or invariant tori, using Bateman's method. As an aside remark, we observe that Arnold in~\cite[Problem 1994-13]{Arnold} considered the problem of studying periodic orbits of Legendrian vector fields. Specifically:

\emph{Consider a particle in a magnetic field on a surface $M^2$. Study Legendrian divergence-free vector fields on $ST^*M^2$ and, in particular, their closed orbits. More generally, consider divergence-free Legendrian vector fields on $\mathbb S^3$ for some (standard?) contact structure. Does there exist a counterexample to the Seifert conjecture in this class of vector fields?}

Although we do not obtain any insight on the motion of a charged particle in a magnetic field on a surface (see Remark~\ref{R:charge}), we hope the new techniques we introduce in this work may be useful for related problems on divergence-free Legendrian fields.

\subsection{Statements of the theorems}\label{SS.state}
Our first main result is a characterization of those Legendrian vector fields on $\SS^3$ that yield null electromagnetic fields on $\RR^3$, via Bateman's construction. As we shall review in Section~\ref{sec:bateman}, the vector fields on $\RR^3$ are obtained from those on $\SS^3$ using the inverse projection map $(\alpha,\beta)|_{t=0}^{-1}$, which is a composition of the stereographic projection and a mirror reflection. The complex structure $J:T\mathbb{C}^2\to T\mathbb{C}^2$ maps any given vector field $X$ that is Legendrian with respect to the standard contact structure on $\SS^3$ to the vector field $JX$, which is the unique Legendrian vector field on $\SS^3$ (with respect to the standard contact structure) such that $|X|=|JX|$, $X\cdot JX=0$ and $(X,JX)$ is positively oriented. Here, the vector product and norm are computed using the canonical metric on $\SS^3$.

\begin{theorem}\label{P:m1}
Let $E_0$ be a real-analytic vector field on $\SS^3$ that is Legendrian with respect to the standard contact structure. Then $E_0$ and $B_0:=JE_0$ yield an electromagnetic Bateman solution on $\RR^3$ associated with the Hopf fibration if and only if $E_0$ and $B_0$ are divergence-free.
\end{theorem}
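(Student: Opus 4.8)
The plan is to reduce the claim to a purely intrinsic condition on $\SS^3$ and then recognize that condition as the tangential Cauchy--Riemann equation, which is exactly what Bateman's Hopf-type recipe demands. Write $X=E_0$ and $Y=B_0=JX$, let $\lambda$ be the contact form \eqref{eq:std}, let $R$ be its Reeb field (the Hopf field), and let $\mu=\lambda\wedge d\lambda$ be the volume form. Since $X$ is Legendrian, $\lambda(X)=0$, so $\iota_X\mu=-\lambda\wedge\iota_X d\lambda$; and because $d\lambda|_{\ker\lambda}$ is the area form compatible with $J$, one has $\iota_X d\lambda=(JX)^\flat$ up to a positive constant. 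Hence $\Div X=0$ is equivalent to $d(\lambda\wedge (JX)^\flat)=0$, and likewise $\Div Y=0$ to $d(\lambda\wedge X^\flat)=0$. Introducing the complex one-form $\theta:=E_0^\flat+\I\,B_0^\flat=X^\flat+\I\,(JX)^\flat$, the two scalar divergence conditions are therefore packaged into the single complex equation $d(\lambda\wedge\theta)=0$.

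The next step exploits that $\theta$ is of type $(1,0)$ along the contact distribution: a direct check gives $\theta(JV)=\I\,\theta(V)$ for $V\in\ker\lambda$, together with $\theta(R)=0$. On $\SS^3\subset\CC^2$ the holomorphic contact form $\eta:=z_1\,dz_2-z_2\,dz_1$ has the same two properties and is nowhere vanishing, so $\theta=\phi\,\eta$ for a unique complex function $\phi$ on $\SS^3$. Using the structure equations $d\lambda=\I\,\eta\wedge\bar\eta$ and $d\eta=2\I\,\lambda\wedge\eta$ (the latter from $\mathcal L_R\eta=2\I\,\eta$, since $R$ generates $z\mapsto e^{\I s}z$), one gets $d(\lambda\wedge\eta)=d\lambda\wedge\eta-\lambda\wedge d\eta=0$, where both terms vanish for structural reasons (a repeated $\eta$ and a repeated $\lambda$), independently of the constants. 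Consequently $d(\lambda\wedge\theta)=d\phi\wedge\lambda\wedge\eta$, and expanding $d\phi$ in the coframe $\{\lambda,\eta,\bar\eta\}$ shows this three-form is a nonzero multiple of $(\bar L\phi)\,\lambda\wedge\eta\wedge\bar\eta$, where $\bar L$ is the $(0,1)$ contact direction dual to $\bar\eta$. Thus $E_0$ and $B_0$ are divergence-free if and only if $\bar L\phi=0$, i.e.\ $\phi$ satisfies the tangential Cauchy--Riemann equation on $\SS^3$.

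To close the argument I would invoke the description (reviewed in Section~\ref{sec:bateman}, following \cite{bode,kbbpsi}) of Hopf-type solutions as those whose Riemann--Silberstein field is $H(\alpha,\beta)$ times the Hopfion field for a holomorphic multiplier $H$. At $t=0$ the map $(\alpha,\beta)$ is the stereographic/mirror identification $\RR^3\cong\SS^3$, under which the Hopfion field pushes forward to the field dual to $\eta$ and the multiplier restricts to $\phi=H|_{\SS^3}$. Hence $E_0,B_0$ yield a Hopf-type Bateman solution precisely when $\phi$ is the boundary value of a holomorphic function on $\mathbb{B}^4$, which by the Bochner extension theorem for the strictly pseudoconvex sphere is equivalent to $\bar L\phi=0$ — exactly the divergence condition found above. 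The real-analyticity hypothesis enters here: it guarantees that the CR function $\phi$ extends holomorphically to a full neighborhood of $\overline{\mathbb{B}^4}$, not merely to the open ball, so that $H(\alpha,\beta)$ is defined along the entire time evolution and Bateman's recipe genuinely returns a null field for all $t$.

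I expect the main obstacle to be this last dictionary rather than the differential-form computation. Establishing rigorously that the Hopf-type ansatz corresponds, at $t=0$ and after the stereographic/mirror identification, to multiplication of $\eta$ by the restriction of a holomorphic function — with correct bookkeeping of the nowhere-zero real multiple relating the $\RR^3$ and $\SS^3$ fields, and with holomorphy on a neighborhood of the closed ball rather than mere CR-regularity on the sphere — is the delicate point, and is where the real-analyticity assumption is genuinely used. By contrast, the identities for $d\lambda$, $d\eta$ and the closedness of $\lambda\wedge\eta$ are routine once the coframe $\{\lambda,\eta,\bar\eta\}$ is fixed, and the equivalence between the two scalar divergence equations and $d(\lambda\wedge\theta)=0$ is immediate from $\iota_X\mu=-\lambda\wedge(JX)^\flat$.
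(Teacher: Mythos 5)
Your proposal is correct and follows essentially the same strategy as the paper: both identify the pair $(E_0,B_0)$ with a single complex function on $\SS^3$ (your $\phi$ with $\theta=\phi\,\eta$ is exactly the paper's coefficient $\tilde h$ in $E_0+\I B_0=\tilde h(v_1+\I v_2)$, since $(v_1+\I v_2)^\flat=\eta$), both reduce the divergence-free condition to the tangential Cauchy--Riemann equation $\overline{\mathbb L}\tilde h=0$, and both conclude via the real-analytic CR extension theorem (Severi--Tomassini in the paper, your Bochner-plus-analyticity upgrade) that this is equivalent to $\tilde h$ being the restriction of a function holomorphic near $\SS^3$, which is what Bateman's Hopf-type recipe requires. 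The only difference is that you derive the key equivalence invariantly from $d(\lambda\wedge\theta)=(\bar L\phi)\,\lambda\wedge\eta\wedge\bar\eta$ and the structure equations of the coframe $\{\lambda,\eta,\bar\eta\}$, whereas the paper computes the two divergences directly in Euclidean coordinates; both computations are correct and yield the same condition.
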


Next, using Bateman's technique, we construct non-vanishing divergence-free Legendrian vector fields with a prescribed set of periodic orbits of vanishing rotation numbers (details are provided in Section~\ref{sec:kpo}); see the aforementioned Arnold's problem for an additional motivation to study divergence-free Legendrian fields.

\begin{theorem}\label{P:m2}
Let $X$ be a non-vanishing Legendrian field on $\SS^3$ (with respect to the standard contact form). Then any periodic orbit of $X$ has vanishing rotation number. Conversely, given any Legendrian link $L=\cup_{i=1}^n L_i$ in $\SS^3$, where each $L_i$ has vanishing rotation number, there is a non-vanishing, real-analytic, divergence-free Legendrian field on $\SS^3$  (with respect to the standard contact structure) such that $L$ is isotopic to a subset of its periodic orbits.
\end{theorem}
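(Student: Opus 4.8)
\emph{Forward direction.} The plan is to use the trivialization of the contact distribution $\xi=\ker\alpha$ provided by $X$ itself. Since $X$ is non-vanishing and Legendrian, the ordered pair $(X,JX)$ is a global frame of the oriented rank-two bundle $\xi$ over $\SS^3$. Any two global frames of $\xi$ differ by a gauge map $\SS^3\to\SO(2)\cong\SS^1$, and since $[\SS^3,\SS^1]=H^1(\SS^3;\ZZ)=0$ every such map is nullhomotopic; hence all global frames are homotopic and the rotation number of a Legendrian knot---the winding of its positively oriented tangent direction relative to a global frame---does not depend on the chosen frame. If now $\gamma$ is a periodic orbit of $X$, its positively oriented tangent direction is $X|_\gamma$, which in the frame $(X,JX)$ is the constant direction; its winding vanishes, and by frame independence the rotation number of $\gamma$ is $0$.

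\emph{Converse, existence of a tangent field.} For the converse I would first produce a smooth non-vanishing Legendrian field on $\SS^3$ tangent to (a Legendrian representative of) $L$, so that each $L_i$ becomes a periodic orbit; this is exactly where the hypothesis enters. After fixing a global frame, a non-vanishing section of $\xi$ is a map $\SS^3\to\SS^1$; such sections exist and are all homotopic, so the only question is whether the tangent direction prescribed along $L$ extends over $\SS^3$. Identifying homotopy classes of maps to $\SS^1$ with $H^1(-;\ZZ)$, the tangent direction along $L$ determines a class in $H^1(L;\ZZ)\cong\ZZ^n$ whose $i$-th component is precisely the rotation number of $L_i$. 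This class extends over $\SS^3$ if and only if it lies in the image of the restriction $H^1(\SS^3;\ZZ)\to H^1(L;\ZZ)$; since $H^1(\SS^3;\ZZ)=0$, the extension exists exactly when all rotation numbers vanish, while the higher obstructions are automatically zero because $\pi_k(\SS^1)=0$ for $k\geq 2$. This yields a non-vanishing Legendrian field $X_0$ having $L$ among its periodic orbits.

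\emph{Converse, divergence-free and real-analytic.} It remains to promote $X_0$ to a field that is in addition divergence-free and real-analytic while keeping $L$ among its closed orbits; I expect this to be the main obstacle. The natural attempt is to reparametrize, $X=\rho X_0$ with $\rho>0$, which preserves the orbit set and reduces the condition $\Div X=0$ to the homological equation $X_0(\log\rho)=-\Div X_0$ along the flow of $X_0$; this is solvable precisely when $\Div X_0$ integrates to zero against every $X_0$-invariant probability measure (it does against the volume by the divergence theorem, but not against arbitrary invariant measures). The difficulty is therefore to arrange the recurrence of $X_0$ away from $L$ so that this obstruction vanishes. The cleaner route, and the one compatible with the electromagnetic motivation, is to build the field from the outset within Bateman's construction: the freedom in the holomorphic datum is used to prescribe the closed orbit $L$, and divergence-freeness is then automatic by Theorem~\ref{P:m1}, with real-analyticity inherited from the Bateman recipe. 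Real-analyticity can alternatively be secured by choosing a real-analytic Legendrian representative of $L$ together with a real-analytic extension, followed by a real-analytic approximation that leaves the orbits near $L$ unchanged.
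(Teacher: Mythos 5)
Your forward direction is correct and is essentially the paper's argument in different clothing: the paper writes $X=\mathrm{Re}(h(v_1+\I v_2))$ with $h:\SS^3\to\CC\setminus\{0\}$ and uses that a periodic orbit $K$ is null-homotopic in $\SS^3$, so $h(K)$ has winding number zero; you instead use frame-independence of the rotation number and compute in the frame $(X,JX)$. Both are fine and rest on the same fact, namely that the contact bundle is globally trivialized and $\SS^3$ is simply connected.

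The converse, however, has a genuine gap. Your obstruction-theoretic step correctly produces a \emph{smooth} non-vanishing Legendrian field tangent to $L$, i.e.\ a non-vanishing function $h_0:\SS^3\to\CC\setminus\{0\}$ with prescribed values along $L$; and you correctly diagnose that the reparametrization route to $\Div X=0$ fails because of the invariant-measure obstruction. But the ``cleaner route'' you then invoke --- ``the freedom in the holomorphic datum is used to prescribe the closed orbit $L$'' --- is precisely the step that needs proving, and you give no mechanism for it. The paper's mechanism is: (1) take a real-analytic Legendrian representative of $L$ and encode its tangent direction by real-analytic functions $H_i:L_i\to\CC\setminus\{0\}$ in the frame $\{v_1,v_2\}$; (2) use the vanishing rotation number to define $\mathrm{Log}(H_i)$ on each $L_i$; (3) apply the Burns--Stout extension theorem, which extends a real-analytic function on a real-analytic Legendrian link to a \emph{holomorphic} function $h$ on a neighbourhood of $\SS^3$; (4) exponentiate, so that $\e^{h}$ is holomorphic, non-vanishing, and restricts to $H_i$ on $L_i$. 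Divergence-freeness then follows because $\Div\bigl(\mathrm{Re}(\Th(v_1+\I v_2))\bigr)=\mathrm{Re}(\mathbb{L}\Th)$ vanishes for CR functions (Equation~\eqref{eq.div}), and real-analyticity is automatic. Without an extension theorem of Burns--Stout type there is no way to pass from your smooth section $h_0$ to a CR one with the same values on $L$; the logarithm step is also essential (and is the second place the rotation-number hypothesis enters), since a direct holomorphic extension of $H_i$ could vanish elsewhere on $\SS^3$. As written, your argument establishes only the existence of a smooth non-vanishing Legendrian field with $L$ among its orbits, not the real-analytic divergence-free one claimed by the theorem.
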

\begin{remark}
The only condition on each $L_i$ is the vanishing rotation number, so that every Legendrian knot, no matter which value its Thurston-Bennequin invariant takes, can be realized as a periodic orbit. Note in particular, that every link type has a Legendrian representative, all of whose components have vanishing rotation number, so these assumptions do not restrict the topology of the link.
\end{remark}

As emphasized, the proof of Theorem~\ref{P:m2} makes use of Bateman's construction. Accordingly, we also obtain null solutions to Maxwell's equations with stable closed electromagnetic lines, which allows us to extend~\cite[Theorem~1]{bode} to the context of non-vanishing electromagnetic fields.

\begin{corollary}\label{cor:rot}
For every link $L=\cup_{i=1}^n L_i$ in $\RR^3$ and every (possibly empty or non-proper) subset $I\subset\{1,2,\cdots,n\}$, there is a smooth isotopy of diffeomorphisms $\Phi_t:\RR^3\to\RR^3$ and a non-vanishing electromagnetic field such that $\Phi_t(\cup_{i\in I}L_i)$ is a subset of the closed electric lines and $\Phi_t(\cup_{i\notin I}L_i)$ is a subset of the closed magnetic lines for all $t\in\mathbb{R}$.
\end{corollary}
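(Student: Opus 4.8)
The plan is to deduce Corollary~\ref{cor:rot} directly from Theorem~\ref{P:m2} by transporting the Legendrian realization on $\SS^3$ back to $\RR^3$ through the Bateman projection. First I would reduce the problem to producing, for the given link $L=\cup_{i=1}^n L_i$ in $\RR^3$, a single non-vanishing electromagnetic Bateman field whose closed electric lines contain $\Phi_0(\cup_{i\in I}L_i)$ and whose closed magnetic lines contain $\Phi_0(\cup_{i\notin I}L_i)$; the statement for all $t$ is then automatic. Indeed, once such a null solution is constructed, the stability mechanism recalled in Section~\ref{S.null} applies: since the field is null and the normalized Poynting field $P$ is smooth, the electric and magnetic lines are carried along the non-autonomous flow $\Phi_t$ of $P$ (with $\Phi_0=\id$), so $E(t)=h(t)\Phi_{t*}E_0$ and $B(t)=h(t)\Phi_{t*}B_0$. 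Thus any closed electric line of $E_0$ becomes $\Phi_t$ of itself at time $t$, and likewise for magnetic lines, which is exactly the required statement about $\Phi_t(\cup_{i\in I}L_i)$ and $\Phi_t(\cup_{i\notin I}L_i)$.

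The core step is the construction of $E_0,B_0$ at time zero. Here I would pull the link $L$ back to $\SS^3$ via the projection map $(\alpha,\beta)|_{t=0}$, whose inverse is a composition of stereographic projection and a mirror reflection, obtaining a link $\widetilde L$ in $\SS^3$. The crucial point, as noted in the Remark following Theorem~\ref{P:m2}, is that every link type admits a Legendrian representative all of whose components have vanishing rotation number; I would invoke this to isotope $\widetilde L$ to a Legendrian link $L'=\cup_{i=1}^n L_i'$ with each $L_i'$ of vanishing rotation number. Theorem~\ref{P:m2} then supplies a non-vanishing, real-analytic, divergence-free Legendrian field $X$ on $\SS^3$ having $L'$ (up to isotopy) as a subset of its periodic orbits. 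Setting $E_0:=X$ and $B_0:=JX$ and applying Theorem~\ref{P:m1}, the divergence-free condition guarantees that $(E_0,B_0)$ yields an electromagnetic Bateman solution on $\RR^3$ associated with the Hopf fibration, hence a genuine null solution.

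It remains to split the orbits between the electric and magnetic parts according to $I$. Since $B_0=JE_0$ is the $J$-rotation of $E_0$, the periodic orbits of $E_0$ are electric lines while those of $B_0$ are magnetic lines; to place the components indexed by $I$ among the electric lines and the rest among the magnetic lines, I would arrange the Legendrian link used in Theorem~\ref{P:m2} so that the components $\{L_i':i\in I\}$ are realized as orbits of $X$ and the components $\{L_i':i\notin I\}$ are realized as orbits of $JX$. Because $JX$ is again a non-vanishing divergence-free Legendrian field of the same regularity, and because any Legendrian link with vanishing rotation numbers is realizable as orbits of such a field, one may apply the realization theorem to the two sub-links simultaneously; alternatively, one realizes the full link as orbits of $X$ and observes that orbits of $X$ project to electric lines, then reapplies the construction to $JX$ for the complementary sub-link, noting that the two constructions can be chosen compatibly since $J$ is a fixed pointwise rotation. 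Finally, transporting $E_0,B_0$ to $\RR^3$ via $(\alpha,\beta)|_{t=0}^{-1}$ sends $L'$ back to a link isotopic to the original $L$, completing the construction at $t=0$.

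The main obstacle I anticipate is the bookkeeping of the electric/magnetic dichotomy through the map $J$ together with the projection to $\RR^3$: one must verify that realizing a prescribed sub-link as periodic orbits of $E_0$ and the complementary sub-link as periodic orbits of $B_0=JE_0$ can be done with a \emph{single} field rather than two incompatible ones, and that the Legendrian isotopies used to achieve vanishing rotation numbers do not disturb the topological link type once projected back to $\RR^3$. Matching the orientation conventions of the mirror reflection in $(\alpha,\beta)|_{t=0}^{-1}$ (so that a \emph{positive} stabilization on $\SS^3$ corresponds to the correct crossing change in $\RR^3$) is the delicate point, but it is purely a matter of tracking the fixed diffeomorphism and does not require any new geometric input beyond Theorems~\ref{P:m1} and~\ref{P:m2}.
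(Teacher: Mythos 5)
Your overall architecture is the same as the paper's: reduce to constructing the fields at $t=0$ (the statement for all $t$ then follows from nullity and transport along the Poynting flow), pull the link back to $\SS^3$ by $(\alpha,\beta)|_{t=0}$, replace it by a Legendrian representative with vanishing rotation numbers, and realize it via the machinery of Theorem~\ref{P:m2}. However, the step you yourself flag as the main obstacle --- realizing the components indexed by $I$ as orbits of $E_0$ and the complementary components as orbits of $B_0=JE_0$ with a \emph{single} field --- is exactly where a black-box invocation of Theorem~\ref{P:m2} does not suffice, and your fallback of ``realizing the full link as orbits of $X$ and then reapplying the construction to $JX$ for the complementary sub-link'' fails: two independent applications of the realization theorem produce two unrelated holomorphic extensions, hence two fields $X$ and $Y$ with no reason to satisfy $Y=JX$. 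The paper resolves this by going inside the proof of Theorem~\ref{P:m2}: on the components $L_i'$ with $i\in I$ one prescribes $H_i$ so that the tangent vector is $\operatorname{Re}(H_i)v_1-\operatorname{Im}(H_i)v_2$ (the $\operatorname{Re}\widetilde F$ normalization), while on the components with $i\notin I$ one prescribes $H_i$ so that the tangent vector is $\operatorname{Im}(H_i)v_1+\operatorname{Re}(H_i)v_2$ (the $\operatorname{Im}\widetilde F$ normalization), and then applies Burns--Stout \emph{once} to the whole link to obtain a common holomorphic extension $h$ and the single Bateman field $F=\e^{h(\alpha,\beta)}(\nabla\alpha\times\nabla\beta)$. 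This is the missing mechanism behind your phrase ``apply the realization theorem to the two sub-links simultaneously.''

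A second, related soft spot: you route the construction through Theorem~\ref{P:m1} by setting $E_0:=X$, $B_0:=JX$, but the statement of Theorem~\ref{P:m2} only asserts that $X$ is divergence-free, whereas Theorem~\ref{P:m1} needs both $X$ and $JX$ divergence-free. This does hold, but only because the proof of Theorem~\ref{P:m2} produces $X$ whose representing function is the restriction of a holomorphic function (hence CR, so both $\operatorname{Re}(\mathbb{L}(\Th))$ and $\operatorname{Im}(\mathbb{L}(\Th))$ vanish); it is not a consequence of the statement alone. The paper sidesteps this by writing down the Bateman field directly from the holomorphic extension rather than passing back through Theorem~\ref{P:m1}. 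Both gaps are repairable, but they require opening up the proof of Theorem~\ref{P:m2} rather than citing it.
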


In general, the electric and magnetic lines of the solutions obtained in Corollary~\ref{cor:rot} are not all closed. In fact, the existence of the Hopfion solution~\cite{R,R2}, where all field lines are closed, motivates the question of whether Seifert foliations of the 3-sphere other than the Hopf fibration can be realized by Bateman fields or not. In this direction, we can show that any (positive) Seifert foliation is diffeomorphic to a Legendrian foliation, with respect to the standard contact structure. In view of the contact geometry connection mentioned before, this is a first step to construct stable electromagnetic fields diffeomorphic to Seifert foliations.

\begin{proposition}\label{P:seifert}
Any Seifert foliation $S_{p,q}$, $p\geq1$, $q\geq1$, of $\SS^3$ is isotopic to the foliation defined by a non-vanishing, real-analytic, divergence-free Legendrian vector field $X$ with respect to the standard contact structure.
\end{proposition}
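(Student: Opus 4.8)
The plan is to realize the Seifert foliation by a field that starts life as the infinitesimal generator of the defining circle action, to make it Legendrian by deforming the \emph{contact structure} rather than the foliation, and then to transport the result back to the standard structure by Gray stability. Concretely, write $S_{p,q}$ as the orbit foliation of the generator of the action $e^{\I t}\cdot(z_1,z_2)=(e^{\I p t}z_1,e^{\I q t}z_2)$, namely
\[
V_{p,q}=p\,(x_1\partial_{y_1}-y_1\partial_{x_1})+q\,(x_2\partial_{y_2}-y_2\partial_{x_2}).
\]
This field is real-analytic, vanishes nowhere on $\SS^3$ (even along the two exceptional fibres $\{z_1=0\}$ and $\{z_2=0\}$), and, being the generator of a one-parameter subgroup of isometries of the round metric, is divergence-free for the standard volume. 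Pairing it with the standard contact form \eqref{eq:std} gives $\lambda(V_{p,q})=p|z_1|^2+q|z_2|^2>0$, so $V_{p,q}$ is everywhere positively transverse to $\xi=\ker\lambda$; in particular it is not yet Legendrian, and on each round torus $\{|z_1|=\mathrm{const}\}$ its slope is positive while the unique tangent Legendrian direction has negative slope, so no reparametrisation tangent to these tori can work.

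First I would reduce the statement to a contact-geometric problem. Suppose I can produce a contact form $\lambda'$ with $\lambda'(V_{p,q})=0$ whose kernel lies in the standard (tight) isotopy class. Then $V_{p,q}$ is Legendrian for $\lambda'$, and Gray stability yields an ambient isotopy $\psi_t$ with $\psi_{1*}\ker\lambda'=\xi$. Consequently $X:=\psi_{1*}V_{p,q}$ is a non-vanishing Legendrian field for the standard structure, and since $\psi_t$ is an ambient isotopy it carries the circle foliation $S_{p,q}$ to the isotopic circle foliation given by the orbits of $X$. This reduction is therefore faithful, and it suffices to build $\lambda'$.

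To build $\lambda'$ I would tilt $\lambda$ so as to absorb $V_{p,q}$ into the contact planes. Away from the exceptional fibres, writing $\sigma_i=-y_i\dd x_i+x_i\dd y_i$ and $r=|z_1|^2$, one can seek $\lambda'$ in the family $a\,\sigma_1+b\,\sigma_2+c\,\dd r$, with functions $a,b$ chosen to kill the torus-tangent part of $V_{p,q}$ and $c\,\dd r$ restoring non-degeneracy; near $\{z_1=0\}$ (resp. $\{z_2=0\}$) this must be matched to a local model built from $\dd x_1,\dd y_1$ (resp. $\dd x_2,\dd y_2$), since $\sigma_1$ and $\dd r$ all degenerate along $\{z_1=0\}$. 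The essential requirement is to produce a path $\lambda_s$, $s\in[0,1]$, from $\lambda_0=\lambda$ to $\lambda_1=\lambda'$ consisting entirely of contact forms: once $\lambda_s\wedge\dd\lambda_s\neq0$ throughout, all the $\ker\lambda_s$ are isotopic, and since $\lambda_0$ is tight and $\SS^3$ carries a unique tight class (Eliashberg), $\lambda'$ is automatically tight. I expect this to be the main obstacle: keeping the contact condition as $V_{p,q}$ is rotated from transverse to tangent, uniformly up to and including the two exceptional fibres, where the character of the available $1$-forms changes.

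Finally I would upgrade $X$ to a divergence-free field by a positive reparametrisation $X\mapsto gX$, which preserves both the Legendrian condition and the orbit foliation. Solving $\cL_{gX}\Omega=0$ for the round volume $\Omega$ amounts to the transport equation $X(\log g)=-\Div_\Omega X$ along the (now closed) orbits of $X$, whose solvability requires $\oint_\gamma \Div_\Omega X=0$ on every orbit $\gamma$. This holds for free: $V_{p,q}$ preserves $\Omega$, so $X$ preserves $\Omega':=\psi_{1*}\Omega$, and writing $\Omega=\rho\,\Omega'$ with $\rho>0$ gives $\Div_\Omega X=X(\log\rho)$, whose integral around any closed orbit vanishes. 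A smooth positive $g$ then exists fibrewise over the orbit space and patches across the exceptional fibres by the Seifert structure; taking all the data real-analytic, and noting $g>0$, delivers the required non-vanishing, real-analytic, divergence-free Legendrian field. As an alternative to the contact tilt one could instead invoke the explicit Hopf-type Bateman fields whose field lines are $(p,q)$ torus knots and appeal to Theorem~\ref{P:m1} to obtain the divergence-free Legendrian representative directly; the burden there shifts to verifying that the resulting orbit foliation is globally $S_{p,q}$ and has no zeros.
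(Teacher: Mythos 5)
Your overall strategy coincides with the paper's: keep the Seifert generator $V_{p,q}$ fixed, deform the \emph{contact form} until $V_{p,q}$ is Legendrian, join the deformed form to the standard one through contact forms, apply Gray/Moser stability, and finally correct the divergence by a positive rescaling. The difficulty is that the one step you defer --- actually exhibiting a contact form $\lambda'$ with $\lambda'(V_{p,q})=0$ together with a path of contact forms to the standard form --- is the entire content of the proposition, and the ansatz you sketch for it fails. In Hopf coordinates $(z_1,z_2)=(\cos s\,e^{\I\phi_1},\sin s\,e^{\I\phi_2})$ one has $\sigma_1=\cos^2\!s\,\dd\phi_1$, $\sigma_2=\sin^2\!s\,\dd\phi_2$ and $V_{p,q}=p\partial_{\phi_1}+q\partial_{\phi_2}$, so a form $a\sigma_1+b\sigma_2+c\,\dd r$ with $a,b$ constant on each torus and annihilating $V_{p,q}$ must restrict to each torus as $\mu(s)\bigl(q\,\dd\phi_1-p\,\dd\phi_2\bigr)$; for $\beta=f(s)\dd\phi_1+g(s)\dd\phi_2+c\,\dd s$ the contact volume is proportional to $fg'-gf'$ (plus a term $-\mu\,V_{p,q}(c)$ if $c$ depends on the angles, whose integral over any closed orbit vanishes), and this is identically zero when $(f,g)$ is everywhere proportional to the fixed vector $(q,-p)$. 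So no torus-invariant tilt can work: any contact form annihilating $V_{p,q}$ must have angle-dependent $\dd\phi_i$-coefficients. The paper's $\alpha_{p,q}$ does exactly this; in Hopf coordinates it is $(p\cos^2\!s+q\sin^2\!s)\cos(\phi_1+\phi_2)\,\dd s+\sin(\phi_1+\phi_2)\sin s\cos s\,(q\,\dd\phi_1-p\,\dd\phi_2)$, one computes $\alpha_{p,q}\wedge\dd\alpha_{p,q}=(p+q)(p\cos^2\!s+q\sin^2\!s)\,\mu_0>0$ for $p,q>0$, and the linear interpolation $\alpha_{p(t),q(t)}$ to $\alpha_{1,1}$ (a rotation of the standard form \eqref{eq:std}) stays contact, so Moser applies directly and no appeal to Eliashberg's classification is needed. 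Without producing such a form --- including checking smoothness and the contact condition along the exceptional fibres, where $\sigma_1$ and $\dd r$ degenerate --- the proof has a hole precisely where you say you expect "the main obstacle" to be.

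The final step is correct but over-engineered: from your own identity $\Div_\Omega X=X(\log\rho)$ with $\Omega=\rho\,\psi_{1*}\Omega$, the transport equation $X(\log g)=-X(\log\rho)$ has the explicit global real-analytic solution $g=1/\rho$, so there is nothing to solve orbitwise or to patch across the exceptional fibres; this is exactly the factor $F_{p,q}$ in the paper. The alternative you mention at the end is not salvageable: the torus-knot Bateman fields of~\cite{kbbpsi} correspond to $h=pq\,z_1^{p-1}z_2^{q-1}$ and therefore vanish along the exceptional fibres whenever $p>1$ or $q>1$, so they are neither non-vanishing nor do their orbits foliate all of $\SS^3$ by the leaves of $S_{p,q}$.
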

\begin{remark}
In view of Theorem~\ref{P:m1}, the only point that prevents us from concluding that $X$ is of Bateman-type is that we cannot check that $JX$ is divergence-free.
\end{remark}

The fields constructed in \cite{kbbpsi} have the property that they contain a set of closed field lines in the shape of a positive torus link. In that construction different torus links lead to different Bateman fields. In our next result we show that there is a single Bateman field that contains every positive torus link. To prove this claim we employ a symmetry of Bateman's construction that allows us to obtain a new pair of Bateman variables $(\tilde{\alpha},\tilde{\beta})$ from a known pair of Bateman variables $(\alpha,\beta)$ by applying complex conjugation and time reversal.

\begin{proposition}\label{P:torus}
There exists a null electromagnetic field whose electric part at time $t=0$ (and hence at any other $t\in\RR$) contains every positive torus knot as a periodic orbit.
\end{proposition}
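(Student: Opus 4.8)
The plan is to produce the field through Bateman's Hopf-type construction, choosing a single holomorphic function $h$ whose associated Legendrian electric field $E_0$ on $\SS^3$ carries closed orbits of every positive torus type at once. I would work in Hopf coordinates $(s,\phi_1,\phi_2)$ on $\SS^3$, with $z_1=\cos s\,e^{\I\phi_1}$, $z_2=\sin s\,e^{\I\phi_2}$ and $s\in(0,\tfrac\pi2)$, so that the Clifford tori $\{s=\mathrm{const}\}$ foliate $\SS^3$ away from the two core Hopf circles. The guiding mechanism is that, on the torus $\{s=s_0\}$, the slope $\tan^2 s_0$ of a tangent direction runs over all of $(0,\infty)$ as $s_0$ ranges over $(0,\tfrac\pi2)$; hence a field possessing this nested family of invariant tori and whose rotation number sweeps every positive rational $p/q$ will have, on each resonant torus, a closed orbit realizing the corresponding $(p,q)$ torus knot. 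The first step is therefore to pin down $h$ so that the resulting $E_0$ displays exactly such a family of resonant periodic orbits.

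The immediate difficulty is that the naive Clifford-tangent Legendrian field $\sin^2 s\,\partial_{\phi_1}-\cos^2 s\,\partial_{\phi_2}$, though divergence-free, has $J$-partner proportional to $\tfrac12\sin2s\,\partial_s$, which is \emph{not} divergence-free; by Theorem~\ref{P:m1} it is therefore not of Bateman type (compare the Remark after Proposition~\ref{P:seifert}). Consequently the torus knots cannot be produced as a tori-tangent foliation, and must instead appear as the isolated resonant closed orbits of a genuine Hopf-type field whose defining holomorphic function $h$ genuinely depends on the angular variables. Here I would follow and extend the torus-knot recipe of \cite{kbbpsi}, where a choice of $h$ built from $\alpha,\beta$ yields a single $(p,q)$ knotted electric line; the task is to select one $h$ whose resonant fibres simultaneously encode the whole family $\{T(p,q)\}$, using the sweep of rotation numbers described above to guarantee that every positive pair $(p,q)$ is attained.

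The remaining issue is handedness: a direct computation of the winding on the resonant tori shows that the natural Hopf-type construction produces torus knots of negative type, i.e., the mirror images $T(p,-q)$, rather than the positive knots $T(p,q)$ demanded by the statement. To remedy this I would invoke the symmetry of Bateman's construction advertised in the introduction: from the Hopf pair $(\alpha,\beta)$ one forms a new pair $(\widetilde\alpha,\widetilde\beta)$ by complex conjugation together with time reversal $t\mapsto-t$. Complex conjugation is an orientation-reversing diffeomorphism of $\SS^3$ that sends $\phi_i\mapsto-\phi_i$ and hence carries each negative torus knot to its positive mirror, while the accompanying time reversal is precisely what is needed to keep $(\widetilde\alpha,\widetilde\beta)$ a solution of the Bateman PDE. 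I would verify (i) that this operation indeed returns an admissible pair of Bateman variables and preserves the null property $E\cdot B=0$, $|E|=|B|$, and (ii) that it reverses the knot type of every resonant closed orbit, so that the electric lines of the resulting field realize all positive torus knots.

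Finally I would assemble the argument: the field built from $(\widetilde\alpha,\widetilde\beta)$ and the chosen $h$ is a null electromagnetic field, and since $\tan^2 s_0$ is a continuous monotone surjection of $(0,\tfrac\pi2)$ onto $(0,\infty)$ it meets every positive rational, so each positive $T(p,q)$ occurs among its closed electric lines; stability for all $t$ then follows from the null property recalled in Section~\ref{S.null}. The principal obstacle I anticipate is the combination in the middle two steps: exhibiting one explicit holomorphic $h$ whose global orbit structure provably contains a resonant closed orbit of every positive torus type, and rigorously controlling the knot handedness together with the verification that the conjugation--time-reversal symmetry preserves the Bateman null structure. The rotation-number sweep and the final completeness count are comparatively routine once $h$ and the handedness are under control.
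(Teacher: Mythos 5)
There is a genuine gap at the central step of your proposal, and the reasoning that creates it is instructive. You correctly observe that the Clifford-torus-tangent Legendrian field $\sin^2 s\,\partial_{\phi_1}-\cos^2 s\,\partial_{\phi_2}$ has a $J$-partner that is not divergence-free, so that by Theorem~\ref{P:m1} it cannot arise from a Hopf-type Bateman solution. But you then conclude that the torus knots ``cannot be produced as a tori-tangent foliation'' and must instead be found as isolated resonant orbits of some holomorphic $h$ extending the recipe of \cite{kbbpsi} --- and you never exhibit such an $h$; you explicitly flag this as the principal unresolved obstacle. That is precisely the crux of the proposition, so the proof is incomplete. Your final paragraph moreover quietly reverts to the tori-tangent picture (the sweep of $\tan^2 s_0$ over all positive rationals) that your second paragraph had ruled out, so the argument is also internally inconsistent.

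The paper's proof shows that the tori-tangent field is in fact the right object: it takes $X=\mathrm{Re}\bigl(\overline{z_1}\,\overline{z_2}\,(v_1+\I v_2)\bigr)$, which is (up to sign) exactly the field you dismissed; it has $|z_1|^2$ and $|z_2|^2$ as first integrals, its slope $-|z_2|^2/|z_1|^2$ sweeps all negative rationals, and hence every $T_{p,-q}$ appears on a resonant Clifford torus in $\SS^3$. The idea you are missing is that the conjugation--time-reversal symmetry is not merely a handedness fix: Lemma~\ref{L.newBat} and Proposition~\ref{P:anti} show that an \emph{antiholomorphic} $h$ (here $h=\overline{z_1}\,\overline{z_2}$) still produces a null electromagnetic field, now of Bateman type for the new variables $(\tilde\alpha,\tilde\beta)$ rather than for the Hopf pair. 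This is what legitimizes the tori-tangent field despite its failing the test of Theorem~\ref{P:m1}. The handedness is then handled not by conjugating the sphere but by the observation (Remark~\ref{rem}) that $\phi_0=(\alpha,\beta)|_{t=0}^{-1}$ is orientation-reversing, so each $T_{p,-q}\subset\SS^3$ projects to the positive knot $T_{p,q}\subset\RR^3$. Without Proposition~\ref{P:anti} (or an explicit holomorphic $h$ realizing all resonances, which you do not supply), your argument does not close.
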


Finally, we prove a theorem on the realization of invariant tori of arbitrary topology as magnetic (or electric) surfaces of null solutions of Bateman type in $\RR^3$ (associated with the Hopf fibration). In particular, the associated divergence-free Legendrian field on $\SS^3$ exhibits a positive volume set of invariant tori of the same topological type. The proof of this result, which is the most technically demanding of the article, consists in constructing a Bateman magnetic field with a knotted periodic orbit that is elliptic with Diophantine frequency; then a deep result by R\"ussmann~\cite{Russ} implies that the aforementioned magnetic line is accumulated by a positive volume set of invariant tori, which yields the desired magnetic surfaces. For the statement we recall that an invariant torus of a vector field is ergodic if all the orbits of the field are dense on the torus.

\begin{theorem}\label{T.torus}
Given a smooth toroidal surface $T$ in $\RR^3$, there is a null solution of Maxwell's equations with a positive volume set of magnetic surfaces isotopic to~$T$. These invariant tori of the magnetic field are ergodic and exist for all $t\in\RR$. An analogous result holds for the associated divergence-free Legendrian field (with respect to the standard contact structure) on $\SS^3$.
\end{theorem}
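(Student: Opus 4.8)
The plan is to reduce the whole statement to producing a single real-analytic, divergence-free Legendrian field on $\SS^3$ (equivalently, via Bateman's recipe, a null solution on $\RR^3$) whose magnetic part has one knotted periodic orbit that is elliptic with a Diophantine transverse rotation number, and then to feed that orbit into R\"ussmann's theorem. First I would fix the core knot $K$ of the given toroidal surface $T$; since a smoothly embedded torus in $\RR^3$ is isotopic to the boundary of a tubular neighborhood of its core, it suffices to produce invariant tori that are thin tubes around a curve isotopic to $K$. Choosing a Legendrian representative of $K$ of vanishing rotation number (always possible, as noted after Theorem~\ref{P:m2}), I would use the construction behind Theorem~\ref{P:m2} together with~\cite{bode} to select a holomorphic $h$ for which the Bateman magnetic field has a periodic orbit $\gamma$ of knot type $K$. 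Because the construction is Bateman's of Hopf type, the resulting $E_0,B_0=JE_0$ form a genuine null solution (consistently with Theorem~\ref{P:m1}), so any invariant structure that $B$ carries at $t=0$ is transported by the flow $\Phi_t$ for all $t$.

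The heart of the argument is to upgrade $\gamma$ to an elliptic orbit with good arithmetic. Since $B$ is divergence-free, the Poincar\'e return map on a transverse disk is area-preserving, so the eigenvalues of its linearization are $\lambda^{\pm1}$ with product $1$; ellipticity is the open condition $|\tr|<2$, giving a transverse rotation number $\omega$. I would introduce enough free parameters into $h$ (the Step~1 construction is flexible) so that $\omega$ sweeps an interval, and then select a value that is Diophantine and for which the orbit remains elliptic. Real-analyticity of the field, guaranteed by the holomorphicity of $h$ and the real-analyticity of the Hopf Bateman variables, is essential here, since R\"ussmann's theorem is an analytic KAM statement and the Diophantine condition on $\omega$ is what makes the local normal form well behaved.

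The main obstacle, and the most delicate step, is verifying the non-degeneracy hypothesis of R\"ussmann's theorem~\cite{Russ}: after passing to the Birkhoff normal form of the area-preserving return map at the elliptic fixed point, at least one normal-form (twist) coefficient must be nonzero, so that the map is not analytically conjugate to a rigid rotation. This requires computing the first nontrivial term of the normal form for the explicit but intricate Bateman field, which is where most of the technical work lies; I expect this can be arranged on an open, positive-measure set of the parameters introduced above, so that the construction can always be completed. Granting the non-degeneracy, R\"ussmann's theorem yields a positive-measure set of invariant curves of the return map accumulating the fixed point, hence a positive-volume family of invariant tori of $B$ accumulating $\gamma$; being thin tubes around a curve isotopic to $K$, these tori are isotopic to $T$.

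Finally I would record the dynamical and time-dependent conclusions. On each invariant torus the flow is analytically conjugate to a linear flow with Diophantine frequency vector, hence minimal, which is precisely the stated ergodicity (every orbit dense). Nullness then propagates the entire positive-volume family to all $t\in\RR$ through the diffeomorphisms $\Phi_t$, proving the $\RR^3$ statement. The $\SS^3$ statement is immediate: by construction the null field on $\RR^3$ is the pushforward under $(\alpha,\beta)|_{t=0}^{-1}$ of the divergence-free Legendrian field $B_0$ on $\SS^3$, and since diffeomorphisms preserve invariant tori, their knot type, and minimality, the same positive-volume set of ergodic invariant tori of the same topological type appears for $B_0$.
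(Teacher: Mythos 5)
Your overall architecture coincides with the paper's: realize the core knot of $T$ as a knotted periodic orbit of the magnetic part of a Hopf-type Bateman field, make that orbit elliptic with Diophantine frequency, and invoke R\"ussmann's theorem to produce a positive-volume family of invariant tori that are thin tubes around the orbit. However, the two steps you yourself flag as delicate are exactly where the proposal is incomplete, and one of them rests on a misreading of \cite{Russ}. R\"ussmann's theorem has \emph{no} twist or Birkhoff non-degeneracy hypothesis: its content is precisely that an elliptic fixed point of a real-analytic area-preserving map is stable under an arithmetic condition on the multiplier alone, because either some Birkhoff coefficient is nonzero (and Moser's twist theorem applies) or the map is analytically conjugate to the rigid rotation (and all nearby circles are invariant). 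The paper invokes R\"ussmann for exactly this reason -- computing a twist coefficient of the explicit Bateman field would be intractable -- so the step you describe as ``the most delicate'' and leave as ``I expect this can be arranged'' is not needed; but as written, your proof has an unverified gap there, and if you genuinely needed the twist condition you would have no argument for it.

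The second, more substantive gap is the mechanism for achieving ellipticity with prescribed frequency. Saying you will ``introduce enough free parameters into $h$'' so that $\omega$ sweeps an interval does not address two obstructions: (a) the value of $h$ on $L$ is pinned down by the requirement that $L$ be an orbit (Theorem~\ref{P:m2} fixes $h|_L$ up to the construction of the tangent framing), so the admissible perturbations must vanish on $L$; and (b) a priori the area-preserving return map could be hyperbolic for every admissible $h$, so one must actually prove the monodromy can be made elliptic. The paper resolves both with Lemma~\ref{lem:rudolph}: Rudolph's totally tangential $\CC$-link function $F$ vanishing on $L$, normalized so that $\partial F/\partial z_i|_L=\overline{z_i}$, is used to replace $h$ by $h+gF$ with $g$ an arbitrary holomorphic function; this keeps $L$ as a periodic orbit while the explicit computation of the normal variational equation shows its matrix is $\bigl(\begin{smallmatrix}0 & -(1+G)\\ 2 & 0\end{smallmatrix}\bigr)$ with $G$ an arbitrarily prescribable real-analytic function on $L$, whence the Floquet multipliers $\exp(\pm\I\omega)$ can be given any desired Diophantine $\omega$. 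Without this (or an equivalent) explicit control of the linearized normal dynamics, the claim that ellipticity with Diophantine frequency can be arranged is unsupported, and the proof does not close. The remaining parts of your argument (reduction of $T$ to a tube around its core knot, ergodicity from minimality of Diophantine linear flow, propagation to all $t$ by nullness, and transfer to $\SS^3$) agree with the paper and are fine.
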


\subsection{Organization of the paper}
The remainder of this paper is structured as follows. Section~\ref{sec:bateman} reviews Bateman's construction and its relation to the Hopf fibration and Legendrian fields on $\SS^3$. This machinery is used in Section~\ref{sec:teo1} to prove Theorem~\ref{P:m1}. Section~\ref{sec:kpo} concerns periodic orbits of Legendrian vector fields. Theorem~\ref{P:m2} is then proved in Section~\ref{sec:teo2}, while the discussion on Legendrian Seifert foliations and the proof of Proposition~\ref{P:seifert} is presented in Section~\ref{sec:seifert}. We investigate transformations between Bateman variables via complex conjugation and time reversal and prove Proposition~\ref{P:torus} in Section~\ref{sec:torus}. The existence of null solutions exhibiting magnetic surfaces of arbitrary topology, cf. Theorem~\ref{T.torus}, is established in Section~\ref{sec:tori}. Finally, in Section~\ref{sec:int} we make several observations about integrable Bateman fields that are of independent interest and provide an explanation of the integrability properties of the fields constructed in~\cite{kbbpsi} and a connection with Rudolph's totally tangential $\mathbb{C}$-links~\cite{rudolphtt,rudolphtt2}.

\section{Bateman's construction of null electromagnetic fields}\label{sec:bateman}

In this section we describe Bateman's construction, which associates to a holomorphic function a null electromagnetic field, and its surprising connection with contact geometry.

As usual, in this work we shall represent an electromagnetic field by its Riemann-Silberstein vector
$$F=E+\I B:\mathbb{R}^{3+1}\to\mathbb{C}^3\,,$$
whose real part $E$ is the (time-dependent) electric field and whose imaginary part $B$ is the magnetic field. For an overview of the history and many uses of the Riemann-Silberstein formulation we point the reader to~\cite{RS}.

Let $\alpha, \beta:\mathbb{R}^{3+1}\to\mathbb{C}$ be two functions satisfying the PDE
\begin{equation}
\label{eq:1}
\nabla \alpha\times\nabla\beta=\I (\partial_t\alpha\nabla\beta-\partial_t\beta\nabla\alpha)\,,
\end{equation}
where $\nabla$ denotes the gradient operator with respect to the three spatial variables.
Bateman showed in~\cite{bateman} that every pair $f,g:\mathbb{C}^2\to\mathbb{C}$ of holomorphic functions defines an electromagnetic field
\begin{equation*}
F=E+\I B=\nabla f(\alpha,\beta)\times\nabla g(\alpha,\beta)\,,
\end{equation*}
which is null for all time $t$.

Using $(z_1,z_2)$ as coordinates of $\mathbb{C}^2$ and setting $h:=\partial_{z_1} f\partial_{z_2} g-\partial_{z_2} f\partial_{z_1} g$, we can rewrite $F$ as
\begin{equation}
\label{eq:bateman}
F=h(\alpha, \beta)\nabla\alpha\times\nabla\beta\,.
\end{equation}
Since $f$ and $g$ are arbitrary holomorphic functions, any holomorphic function $h$ can be used to define a Bateman field. Conversely, any (real-analytic) null field can be locally obtained via Bateman's construction~\cite{hogan} with some choice of $(\alpha,\beta)$ and holomorphic $h$. Actually, it suffices that $h$ be holomorphic on a neighbourhood of the image of $(\alpha,\beta)$.

Since these solutions to Maxwell's equations are null, the topology of their field lines is preserved for all time. That is, as recalled in Section~\ref{S.null}, the time evolution of a field line is given by transporting it along the normalized Poynting vector field.

The main difficulty in Bateman's construction is to find global solutions to Equation~\eqref{eq:1}. A particularly interesting  and fruitful choice of $(\alpha,\beta)$ is (see e.g.~\cite{kbbpsi}):
\begin{align}
\label{eq:stereo3}
\alpha(x,y,z,t)&=\frac{x^2+y^2+z^2-t^2-1+2\I z}{x^2+y^2+z^2-(t-\I)^2}\,,\nonumber\\
\beta(x,y,z,t)&=\frac{2(x-\I y)}{x^2+y^2+z^2-(t-\I)^2}\,.
\end{align}
It is easy to check that these functions solve the PDE~\eqref{eq:1}. Notice that
$$|\alpha|^2+|\beta|^2=1\,,$$ so that the image of the map $(\alpha,\beta)$ lies in the $3$-sphere. In fact, it was shown in~\cite{bode} that at $t=0$ the map $(\alpha,\beta)$ is the inverse of the stereographic projection $\SS^3\to\mathbb{R}^3\cup\{\infty\}$ followed by a mirror reflection. More precisely, denoting the composition of the stereographic projection and the mirror reflection along the plane $\{y=0\}$ by $\phi_0:\SS^3\to\mathbb{R}^3\cup\{\infty\}$, we have $\phi_0(\alpha,\beta)|_{t=0}=\id$. In fact, for any value of $t$ the map $(\alpha,\beta)$ defines a diffeomorphism from $\mathbb{R}^3\cup\{\infty\}\to\SS^3$.

It is clear that the Poynting field of a Bateman solution does not depend on the holomorphic function $h$. In the particular case of the solution $(\alpha,\beta)$ presented in Equation~\eqref{eq:stereo3}, at time $t=0$ the Poynting field is (up to a real factor) the pushforward by $\phi_0$ of the Hopf field on $\SS^3$
\begin{equation}\label{eq:hopf}
-y_1\partial_{x_1}+x_1\partial_{y_1}-y_2\partial_{x_2}+x_2\partial_{y_2}\,.
\end{equation}
Its integral curves are tangent to the fibers of the Hopf fibration, so that all field lines are closed and any pair of them forms a Hopf link (identifying $\RR^3\cup\{\infty\}$ with $\SS^3$). In what follows, a Bateman solution with the choice of $(\alpha,\beta)$ as in Equation~\eqref{eq:stereo3} will be called \emph{of Hopf type}. Note that the Hopf field~\eqref{eq:hopf} is the dual of the standard contact form on $\SS^3$, cf. Equation~\eqref{eq:std}. It then follows that at time $t=0$ both the electric and magnetic part of any Bateman field of Hopf type are pushforwards by $\phi_0$ of Legendrian fields on $\SS^3$ (with respect to the standard contact form).

The time evolution of these fields can then be understood as keeping the Legendrian fields on $\SS^3$ fixed, but varying the projection map $(\alpha,\beta)$ (or rather $(\alpha,\beta)|_{t=t_*}^{-1}:\SS^3\backslash\{(1,0)\}\to\mathbb{R}^3$) via its $t$-dependence. More precisely, notice that the fields
\begin{align}\label{eq:v1v2}
    v_1&=-x_2\partial_{x_1}+y_2\partial_{y_1}+x_1\partial_{x_2}-y_1\partial_{y_2}\,,\nonumber\\
    v_2&=-y_2\partial_{x_1}-x_2\partial_{y_1}+y_1\partial_{x_2}+x_1\partial_{y_2}\,,
\end{align}
form a basis of the standard contact plane at each point in $\SS^3$. They are isometric (via rotations) to the Hopf field in Equation~\eqref{eq:hopf}, so all their field lines are closed. It turns out that, for each fixed time $t$, they are (up to the same real factor, given explicitly in~\cite{bode}) the pushforwards of $\text{Re}(\nabla\alpha\times\nabla\beta)$ and $\text{Im}(\nabla\alpha\times\nabla\beta)$ by the map $(\alpha,\beta)|_t$. In other words, they correspond to the electric part of a Bateman solution of Hopf type with $h=1$ and $h=-\I$, respectively.

Therefore, the field
\begin{equation}
    \widetilde{F}:=h(z_1,z_2)(v_1+\I v_2)
\end{equation}
is the pushforward by $(\alpha,\beta)|_t$ of the Bateman field in Equation~\eqref{eq:bateman} with the choice~\eqref{eq:stereo3}, up to a time dependent real factor. In particular, the two fields $F$ and $\widetilde{F}$ have topologically identical field lines for all $t$, and one of them is non-vanishing if and only if the other is non-vanishing. We say that the real and imaginary part of $\widetilde{F}$ form \textit{the pair of Legendrian fields corresponding to the Bateman solution} $F$ in Equation~\eqref{eq:bateman}. Note that $\widetilde{F}$ is defined on $\SS^3$, while $F$ is defined on $\mathbb{R}^{3+1}$. We also say that a pair $(E_0,B_0)$ of Legendrian fields on $\SS^3$ is of Bateman-type if it arises as $\widetilde{F}=E_0+\I B_0$ for some Bateman solution $F$ of Hopf type.

\begin{remark}\label{rem}
As observed in \cite{bode}, the fact that the inverse of $(\alpha,\beta)|_{t=0}$ is not the stereographic projection map, but rather the composition of a stereographic projection with a mirror reflection, means that if a Bateman type Legendrian field on $\SS^3$ has a closed orbit of knot type $K$, then the corresponding electric or magnetic part in $\RR^3$ has a closed orbit, whose knot type is the mirror image of $K$. For example, the electromagnetic fields in~\cite{kbbpsi} possess field lines in the shape of positive torus knots in $\mathbb{R}^3$, but the corresponding field lines in the fields on $\SS^3$ are negative torus knots.
\end{remark}

\section{From Legendrian fields to Bateman solutions: proof of Theorem~\ref{P:m1}}\label{sec:teo1}

Let $F$ be a Bateman solution to Maxwell's equations in $\RR^{3+1}$ of Hopf type. As presented in Section~\ref{sec:bateman} (see~\cite{bode} for details), it yields a Bateman-type pair of Legendrian fields $(E_0,B_0) $ on $\SS^3$ with respect to the standard contact structure. By construction, these fields are real-analytic, divergence-free and $B_0=JE_0$ (recall the definition of $J$ in Section~\ref{SS.state}). In this section we show that, conversely, every pair of Legendrian fields with these properties is of Bateman type. As an application, we obtain a simple topological necessary condition for a vector field to be the electric or magnetic part of a Bateman solution of Hopf type.

First, we need some background on the kind of functions $\Th:\SS^3\to\mathbb{C}$ that can be extended to holomorphic functions on a neighbourhood of $\SS^3$. Let $M$ be a smooth real submanifold of $\mathbb{C}^n$. Its complex tangent space $H_pM$ at a point $p\in M$ is defined as $T_pM\cap J(T_pM)$, where $J$ is the complex structure on $\mathbb{C}^n$. $M$ is said to be a \textit{CR-manifold} if the complex dimension of $H_pM$ does not depend on $p\in M$.

The 3-sphere $\SS^3$ is an example of a CR-manifold in $\mathbb{C}^2$, where the complex tangent space is of (complex) dimension 1, given precisely by the contact distribution of the standard contact form, that is, at each point in $\SS^3$ it is generated by $v_1$ and $v_2$ introduced in Equation~\eqref{eq:v1v2}. We can think of the complex tangent space as a complex line bundle over $\SS^3$ generated by
\begin{equation}
\mathbb{L}=(-x_2+\I y_2)\frac{\partial}{\partial z_1}+(x_1-\I y_1)\frac{\partial}{\partial z_2}\,,
\end{equation}
with $\tfrac{\partial}{\partial z_i}=\tfrac{1}{2}(\tfrac{\partial}{\partial x_i}-\I\tfrac{\partial}{\partial y_i})$.
Note that $\text{Re}(\mathbb{L})=\tfrac{1}{2}v_1$ and $\text{Im}(\mathbb{L})=\tfrac{1}{2}v_2$. The \textit{tangential Cauchy-Riemann operator} is then defined as
\begin{equation}
\overline{\mathbb{L}}=(-x_2-\I y_2)\frac{\partial}{\partial\overline{z_1}}+(x_1+\I y_1)\frac{\partial}{\partial \overline{z_2}}
\end{equation}
with $\tfrac{\partial}{\partial \overline{z_i}}=\tfrac{1}{2}(\tfrac{\partial}{\partial x_i}+\I\tfrac{\partial}{\partial y_i})$.
A smooth function $f:\SS^3\to\mathbb{C}$ is called a \textit{CR-function} if it satisfies the \textit{tangential Cauchy-Riemann equations}:
\begin{equation}
\overline{\mathbb{L}}f=0\,.
\end{equation}

CR-functions play an important role in the context of holomorphic extensions of functions on submanifolds of complex space. If $\tilde{f}:\SS^3\to\mathbb{C}$ is a real-analytic function on $\SS^3$, then $\tilde{f}$ satisfies the tangential Cauchy-Riemann equations if and only if $\tilde{f}$ extends to a holomorphic function on some neighbourhood of $\SS^3$, i.e., there is a neighbourhood $U\subset \CC^2$ of $\SS^3$ and a holomorphic function $f:U\to\mathbb{C}$ such that $f|_{\SS^3}=\tilde{f}$. This result (with varying degree of generality) is attributed to Severi~\cite{severi1931, severi1958} and Tomassini~\cite{tomassini}, and holds for any real-analytic CR-manifold and its corresponding system of tangential Cauchy-Riemann equations. See~\cite{CR} for a recent account on the subject.

Now let us consider the Legendrian fields from the statement of Theorem~\ref{P:m1}. Since $E_0$ and $B_0=JE_0$ are Legendrian and real-analytic, it easily follows that they can be expressed as linear combinations of $v_1$ and $v_2$ (cf. Equation~\eqref{eq:v1v2}):
\begin{align*}
    E_0&=\text{Re}(\Th)v_1-\text{Im}(\Th)v_2\,,\nonumber\\
    B_0&=\text{Re}(\Th)v_2+\text{Im}(\Th)v_1\,,
\end{align*}
for some real-analytic function $\Th:\SS^3\to\mathbb{C}$. We compute the divergence of $E_0$ and $B_0$ and find that
\begin{align}\label{eq.div}
\text{div}(E_0)=& \text{ div}\Big(\text{Re}(\Th)v_1-\text{Im}(\Th)v_2\Big)\nonumber\\
=&-x_2\frac{\partial \text{Re}(\Th)}{\partial x_1}+y_2\frac{\partial \text{Im}(\Th)}{\partial x_1}+y_2\frac{\partial \text{Re}(\Th)}{\partial y_1}+x_2\frac{\partial \text{Im}(\Th)}{\partial y_1}\nonumber\\
&+x_1\frac{\partial \text{Re}(\Th)}{\partial x_2}-y_1 \frac{\partial \text{Im}(\Th)}{\partial x_2}-y_1\frac{\partial \text{Re}(\Th)}{\partial y_2}-x_1\frac{\partial\text{Im}(\Th)}{\partial y_2}\nonumber\\
=&\text{ Re}(\mathbb{L}(\Th))\,.
\end{align}
Similarly, we have
\begin{align}
\text{div}(B_0)=&\text{ div}\Big(\text{Re}(\Th)v_2+\text{Im}(\Th)v_1\Big)\nonumber\\
=&-x_2\frac{\partial \text{Im}(\Th)}{\partial x_1}-y_2\frac{\partial \text{Re}(\Th)}{\partial x_1}+y_2\frac{\partial \text{Im}(\Th)}{\partial y_1}-x_2\frac{\partial \text{Re}(\Th}{\partial y_1}\nonumber\\
&+x_1\frac{\partial \text{Im}(\Th)}{\partial x_2}+y_1\frac{\partial \text{Re}(\Th)}{\partial x_2}-y_1 \frac{\partial \text{Im}(\Th)}{\partial y_2} +x_1\frac{\partial \text{Re}(\Th)}{\partial y_2}\nonumber\\
=&\text{ Im}(\mathbb{L}(\Th)).
\end{align}

Therefore, $E_0$ and $B_0$ are both divergence-free if and only if the analytic  function $\Th$ satisfies the tangential Cauchy-Riemann equations. In turn, this is equivalent to the existence of a holomorphic extension of $\Th$ on a neighbourhood of $\SS^3\subset\CC^2$. Accordingly, there is a neighbourhood $U\subset \CC^2$ of $\SS^3$ and a holomorphic function $h:U\to\mathbb{C}$ such that
\begin{equation}\label{eq:rest}
h|_{\SS^3}=\Th\,.
\end{equation}



Finally, taking $(\alpha,\beta):\RR^{3+1}\to\CC^2$ as in Equation~\eqref{eq:stereo3}, we notice that Bateman's construction works if the function $h$ is holomorphic in a neighborhood of the image of $(\alpha,\beta)$, which is $\SS^3\subset\CC^2$. Accordingly, the Bateman field
\[
F=h(\alpha,\beta)(\nabla\alpha\times\nabla\beta)
\]
is a null solution to Maxwell's equations in $\RR^{3+1}$. Since $h$ satisfies Equation~\eqref{eq:rest}, $(E_0,B_0)$ is the pair of Legendrian fields on $\SS^3$ corresponding to the solution $F$. This completes the proof of the theorem.

\begin{remark}
Since the Legendrian fields of Bateman type are (up to a known real factor) pushforwards by the map $(\alpha,\beta)|_{t=0}$ of the initial electric and magnetic fields, we can interpret Theorem~\ref{P:m1} as a result comparable to~\cite[Section~3]{kpsi}, in that it provides a possibility to investigate if an electromagnetic field is null for all time by studying only its initial datum at $t=0$.
\end{remark}

\begin{remark}\label{R:charge}
There is an interesting connection between the study of divergence-free Legendrian fields and the magnetic geodesic problem~\cite{Ginz}. The magnetic geodesic problem on $\SS^2$ yields a Hamiltonian vector field on $\SS^3$ which is Legendrian with respect to the standard contact form, and divergence-free. Unfortunately, it cannot be embedded into a Bateman-type couple as in Theorem~\ref{P:m1} unless the magnetic field is identically zero on $\SS^2$. In this case one obtains a Hopf field on $\SS^3$.
\end{remark}

To conclude this section, we show that any Bateman-type non-vanishing field on $\SS^3$ is homotopic to the Hopf field $v_1$ in Equation~\eqref{eq:v1v2} via non-vanishing Legendrian fields. Since any Bateman-type field is Legendrian with respect to the standard contact structure, the claim follows from the following standard result (we include a short proof for the sake of completeness).

\begin{proposition}\label{P:m3}
Let $X$ be a non-vanishing Legendrian field on $\SS^3$ (with respect to the standard contact structure). Then $X$ is homotopic to $v_1$ via non-vanishing Legendrian fields.
\end{proposition}
\begin{proof}
As in the proof of Theorem~\ref{P:m1}, we can represent every Legendrian field by a complex-valued function $h$ on $\SS^3$, which expresses the field in the basis $\{v_1,v_2\}$. This identification provides a homeomorphism between the space of non-vanishing Legendrian fields with respect to the standard contact structure and the space of non-vanishing complex-valued functions $h:\SS^3\to\mathbb{C}\backslash\{0\}$. Since $\pi_3(\mathbb{C}\backslash\{0\})=0$, any two such maps are homotopic, which implies that the corresponding Legendrian fields are homotopic via non-vanishing Legendrian fields. In particular, $X$ is homotopic to $v_1$, which corresponds to the function $h=1$.
\end{proof}

\section{Knotted periodic orbits of non-vanishing Legendrian fields}\label{sec:kpo}

In this section we focus on non-vanishing Legendrian fields (with respect to the standard contact structure) and their periodic orbits. As explained in Section~\ref{sec:bateman}, a periodic orbit of the electric (or magnetic) part of a Bateman solution of Hopf type (at $t=0$) corresponds to a periodic orbit of the associated Legendrian field on $\SS^3$. It is thus a Legendrian knot with respect to the standard contact structure. In Theorem~\ref{P:m2} we claim that whether or not a Legendrian knot can be realized by a periodic orbit of a non-vanishing Legendrian field (and of a non-vanishing Bateman field) is only determined by one of the classical contact invariants of Legendrian knots, the rotation number~\cite[Section~3.5]{Geiges}:

\begin{definition}\label{def:rot}
Let $K$ be a null-homologous Legendrian knot in a contact manifold $(M,\xi)$ and let $\Sigma$ be an embedded orientable surface with $\partial \Sigma=K$. Given a trivialization of the restriction of the contact bundle $\xi|_{\Sigma}$ and a non-vanishing tangent vector field $V$ on $K$, we can interpret $V$ as a loop in $\mathbb{C}\backslash\{0\}$ via its coordinates with respect to the given trivialization. Then the \emph{rotation number} $\cR(K)$ is defined as the winding number of this loop, and is independent of the trivialization.
\end{definition}
\begin{remark}
We recall that any orientable $2$-plane bundle over a surface with boundary is trivial, so the trivialization in Definition~\ref{def:rot} always exists. In the case of the standard contact structure on $\SS^3$, the vector fields $\{v_1,v_2\}$ introduced in Equation~\eqref{eq:v1v2} provide a basis of such a trivialization, which is, in fact, a global basis. Accordingly, the rotation number of a Legendrian knot on $\SS^3$ (with respect to the standard contact structure) is independent of the Seifert surface $\Sigma$.
\end{remark}

In Section~\ref{sec:teo2} we prove Theorem~\ref{P:m2} and Corollary~\ref{cor:rot} using Bateman's construction and the correspondence explained in Section~\ref{sec:bateman} between Legendrian fields and null solutions of Maxwell's equations. The problem of realizing any foliation by circles on $\SS^3$ as a Bateman-type electric (or magnetic) field is explored in Section~\ref{sec:seifert}, where we prove Proposition~\ref{P:seifert}.

\subsection{Proof of Theorem~\ref{P:m2} and Corollary~\ref{cor:rot}}\label{sec:teo2}

Let $X$ be a Legendrian (with respect to the standard contact structure) and non-vanishing vector field on $\SS^3$. We argued in Section~\ref{sec:teo1} that it can be written in terms of the Hopf basis $\{v_1,v_2\}$ as
\begin{equation}
\text{Re}(h(v_1+\I v_2))
\end{equation}
for some complex-valued function $h:\SS^3\to\mathbb{C}\backslash\{0\}$. Obviously, the function $h$ is defined by expressing $X$ in terms of the aforementioned basis of the contact distribution.

Let $K$ be a periodic orbit of~$X$. Since $\SS^3$ is simply-connected, $K$ is null-homotopic and hence there is a homotopy of loops $K_s$, $s\in[0,1]$, with $K_0=K$ and $K_1$ a point $(z_1^0,z_2^0)\in\SS^3\subset\CC^2$. Then $h(K_s)$ is a homotopy in $\mathbb{C}\backslash\{0\}$ from $h(K)$ to a point $h(z_1^0,z_2^0)$. The rotation number of $K$ is, by definition, the winding number of the loop $h(K)$. Since the winding number is a homotopy invariant in $\mathbb{C}\backslash\{0\}$ and a constant loop has vanishing winding number, we conclude that $\cR(K)=0$.

Conversely, suppose that $L=\cup_{i=1}^n L_i$ is a Legendrian link on $\SS^3$ (standard contact structure) with $\cR(L_i)=0$ for all $i$. We can safely assume that $L$ is real-analytic (up to a Legendrian isotopy)~\cite{rudolphtt}, so using~\cite[Lemma~2]{bode} we can define a real-analytic function $H_i:L_i\to\mathbb{C}\backslash\{0\}$ for each component of the link, so that a nonzero tangent vector of $L_i$ at any point on $L_i$ is given by
\begin{equation}\label{eq.tang}
\text{Re}(H_i)v_1-\text{Im}(H_i)v_2\,.
\end{equation}
Since $L_i$ has vanishing rotation number, the winding number of the loop $H_i(L_i)\subset\CC\backslash\{0\}$ is zero and hence $\text{Log}(H_i)$ is a well defined real-analytic function on $L_i$.

Now, the fact that $L_i$ is a real-analytic Legendrian knot and $\text{Log}(H_i)$ is real-analytic, allows us to apply directly the extension theorem by Burns and Stout~\cite{burns}. Therefore, we conclude that $\text{Log}(H_i)$ extends to a holomorphic function $h$ on a neighbourhood $U$ of $\SS^3\subset\CC^2$, i.e., $h|_{L_i}=\text{Log}(H_i)$ for all $i$.

Finally, we take the exponential of $h$, that is
$$\e^{h}:U\to\mathbb{C}\backslash\{0\}\,,$$
which is a holomorphic function that satisfies $\e^{h}|_{L_i}=H_i$. It then follows from Equation~\eqref{eq.tang} that the real part of
\begin{equation*}
\widetilde F:=\e^{h}(v_1+\I v_2)
\end{equation*}
is a vector field $X$ on $\SS^3$ with each $L_i$ as a periodic orbit. By construction, $X$ is non-vanishing and Legendrian. Moreover, since $h$ is a holomorphic function in $U$ (and hence a CR-function), Equation~\eqref{eq.div} shows that $\Div X=0$. This completes the proof of Theorem~\ref{P:m2}.

To prove Corollary~\ref{cor:rot}, we first observe (as discussed in Section~\ref{sec:bateman}) that the image of $L$ by the map $(\alpha,\beta)|_{t=0}:\RR^3\cup\{\infty\}\to \SS^3$, which is a diffeomorphism, is a link $L'\subset\SS^3$. Since any link on $\SS^3$ is isotopic to a Legendrian link (with respect to the standard contact structure) all of whose components have vanishing rotation number, see e.g.~\cite[Section~2.7]{etnyre}, we can safely assume (up to isotopy) that $L$ is a link in $\RR^3$ whose corresponding link $L'$ satisfies the aforementioned properties.

Now we can apply the previous construction with the components $L'_i$, $i\in I$, and for the other components of $L'$ we define a real-analytic function $H_i:L'_i\to\mathbb{C}\backslash\{0\}$, $i\not\in I$, so that a nonzero tangent vector is given by
\begin{equation*}
\text{Im}(H_i)v_1+\text{Re}(H_i)v_2\,.
\end{equation*}
Arguing as before we obtain a common holomorphic extension $h:U\to\CC^2$, $h|_{L'_i}=H_i$ for all $i$, and we can then define the Bateman field on $\RR^{3+1}$
\begin{equation*}
F=\e^{h(\alpha,\beta)}(\nabla\alpha\times\nabla\beta)\,,
\end{equation*}
which is non-vanishing by construction. We know from Section~\ref{sec:bateman} that, for each $t$, the pushforward by $(\alpha,\beta)|_t$ of the electric and magnetic parts of $F$ are precisely the real and imaginary parts of $\widetilde F$, up to a non-vanishing ($t$-dependent) proportionality factor. Since $\cup_{i\in I} L'$ is a subset of periodic orbits of $\text{Re}\,\widetilde F$, and the remaining components of $L'$ are a subset of periodic orbits of $\text{Im}\,\widetilde F$, we finally conclude that, for all $t$, the electric part of $F$ has a subset of periodic orbits isotopic to $\cup_{i\in I}L_i$, and the other components correspond to periodic orbits of the magnetic part. The corollary then follows.

\subsection{Legendrian Seifert foliations}\label{sec:seifert}

The Bateman solutions in Corollary~\ref{cor:rot} exhibit a prescribed set of knotted periodic orbits, but certainly not all electric or magnetic lines are closed. In light of Ra{\~n}ada's solution~\cite{R}, which corresponds to the Hopf fibration on $\SS^3$, it is natural to ask if such topological configurations of null electromagnetic fields can occur for other link types. To be precise, we are looking for an electric and/or magnetic field with the property that all the orbits of the corresponding Bateman-type Legendrian field on $\SS^3$ are closed.

A classical result of Epstein~\cite{epstein} shows that a vector field on $\SS^3$ all of whose orbits are closed forms a Seifert foliation. Every regular fiber of a Seifert foliation of $\SS^3$ is a torus link and the two exceptional fibers always form a Hopf link. The family of Seifert foliations of $\SS^3$ is completely characterized by two integers $p$ and $q$, $q\geq1$, with $\gcd(p,q)=1$. The leaf of the foliation $S_{p,q}$ through a point $(z_1,z_2)\in \SS^3$ is given as the parametric curve
\begin{equation*}
(z_1\e^{\I p\varphi},z_2\e^{\I q\varphi})\,,
\end{equation*}
where $\varphi\in[0,2\pi)$. The leaf is a $(p,q)$-torus knot unless $z_1=0$ or $z_2=0$, in which case it is an unknot.
Any two such foliations $S_{p,q}$ and $S_{p',q'}$ with $(p,q)\neq(p',q')$, $(p,q)\neq (q',p')$ and $\gcd(p,q)=\gcd(p',q')=1$ are not equivalent, while every Seifert foliation of $\SS^3$ is equivalent to $S_{p,q}$ for some choice of $(p,q)$. We recall that two foliations are said to be equivalent if there is an orientation-preserving diffeomorphism mapping one to the other.

We infer from Theorem~\ref{P:m1} that a necessary condition for a Seifert foliation to be of Bateman-type is that all its leaves must be Legendrian with respect to the standard contact structure. See~\cite{liebermann,pang} for a general study of~\textit{Legendrian foliations}. Here we prove Proposition~\ref{P:seifert}, i.e., that every positive Seifert foliation $S_{p,q}$, $p>0$, $q>0$, is equivalent to one that is Legendrian with respect to the standard contact structure.

\begin{proof}[Proof of Proposition~\ref{P:seifert}]
A vector field $X_{p,q}$ tangent to the Seifert foliation $S_{p,q}$ of $\SS^3$ is given by
\begin{equation*}
X_{p,q}=-py_1\partial_{x_1}+px_1\partial_{y_1}-qy_2\partial_{x_2}+qx_2\partial_{y_2}\,,
\end{equation*}
where we are using coordinates $z_i=x_i+\I y_i$, $i=1,2$. Obviously, $X_{p,q}$ is divergence-free. A straightforward computation shows that $X_{p,q}$ is in the kernel of the $1$-form
\begin{align}
    \alpha_{p,q}:=&\left(-\frac{(p (x_1^2 + y_1^2) + q (x_2^2 + y_2^2)) (x_1 x_2 - y_1 y_2) x_1}{x_1^2 + y_1^2} - q \frac{(y_1 x_2 + x_1 y_2) y_1}{x_1^2 + y_1^2}\right)\dd x_1\nonumber\\
    &+\left(-\frac{(p (x_1^2 + y_1^2) + q (x_2^2 + y_2^2)) (x_1 x_2 - y_1 y_2) y_1}{x_1^2 + y_1^2} + q \frac{(y_1 x_2 + x_1 y_2) x_1}{x_1^2 + y_1^2}\right)\dd y_1\nonumber\\
    &+\left(\frac{(p (x_1^2 + y_1^2) + q (x_2^2 + y_2^2)) (x_1 x_2 - y_1 y_2) x_2}{x_2^2 + y_2^2} + p \frac{(y_1 x_2 + x_1 y_2) y_2}{x_2^2 + y_2^2}\right)\dd x_2\nonumber\\
    &+\left(\frac{(p (x_1^2 + y_1^2) + q (x_2^2 + y_2^2)) (x_1 x_2 - y_1 y_2) y_2}{x_2^2 + y_2^2} - p \frac{(y_1 x_2 + x_1 y_2) x_2}{x_2^2 + y_2^2}\right)\dd y_2\,.
\end{align}
This $1$-form is smooth everywhere (actually, real-analytic). This can be easily seen using Hopf coordinates $(z_1,z_2)=(\cos s\exp(\I\phi_1),\sin s \exp(\I\phi_2))$, $s\in[0,\pi/2]$, $\phi_{1,2}\in[0,2\pi)$, where $\alpha_{p,q}$ takes the simpler form
\[
\alpha_{p,q}=(p\cos^2 s+q\sin^2 s)\cos(\phi_1+\phi_2)ds+\sin(\phi_1+\phi_2)\sin s\cos s (qd\phi_1-pd\phi_2)\,,
\]
and $X_{p,q}$ reads as
\[
X_{p,q}=p\partial_{\phi_1}+q\partial_{\phi_2}\,.
\]

We claim that the 1-form $\alpha_{p,q}$ is contact (and hence its kernel defines a contact structure $\xi_{p,q}$) if $p>0$, $q>0$. Indeed, an easy computation in Hopf coordinates yields
\begin{align*}
\alpha_{p,q}\wedge d\alpha_{p,q}=(p+q)(p\cos^2s+q\sin^2s)\Big(-\sin s\cos s ds\wedge d\phi_1\wedge d\phi_2\Big)\\
=(p+q)(p\cos^2s+q\sin^2s)\mu_0\,,
\end{align*}
where $\mu_0$ is the standard volume-form on $\SS^3$ written in Hopf coordinates. It is then clear that $\alpha_{p,q}$ is a (real-analytic) contact form for any choice of $p>0$ and $q>0$, even if these numbers take non-integer values.

In view of the expression of $\alpha_{p,q}$ it is easy to construct an homotopy of contact forms that connects $\alpha_{p,q}$ with $\alpha_{1,1}$:
\[
\alpha(t):=\alpha_{p(t),q(t)}
\]
with $p(t)=t+p(1-t)$ and $q(t)=t+q(1-t)$, $t\in[0,1]$. Obviously $\alpha(0)=\alpha_{p,q}$ and $\alpha(1)=\alpha_{1,1}$. Moser's path method~\cite{Geiges} then implies that there is a (contact) isotopy $\Phi_t$ of diffeomorphisms that transforms the corresponding contact structures. In fact, since
\begin{equation*}
\alpha_{1,1}=-x_2\dd x_1+y_2\dd y_1+x_1 \dd x_2-y_1 \dd y_2\,,
\end{equation*}
is obtained from the standard contact form on $\SS^3$ by the rotation $(x_1,y_1,x_2,y_2)\mapsto(x_1,-x_2,y_1,y_2)$, we infer that all the contact structures $\xi_{p,q}$, $p>0$, $q>0$, are equivalent to the standard contact structure on $\SS^3$. Since the homotopy of contact forms $\alpha(t)$ is real analytic in $t$ and on $\SS^3$, the proof of Moser's theorem shows that $\Phi_t$ is a real-analytic isotopy.

Denoting by $\Phi_{p,q}$ the real-analytic diffeomorphism of $\SS^3$ such that $\xi_{p,q}$ is the pullback of the standard contact structure, we infer that the pushforward $(\Phi_{p,q})_*(X_{p,q})$ is a real-analytic Legendrian vector field with respect to the standard contact structure. Moreover, since $X_{p,q}$ is divergence-free, i.e., it preserves the standard volume form,
\[
L_{X_{p,q}}\mu_0=0\,,
\]
we conclude that $(\Phi_{p,q})_*(X_{p,q})$ preserves the volume form $(\Phi_{p,q}^{-1})^*\mu_0$. Defining the positive real-analytic function $F_{p,q}$ on $\SS^3$ as
\[
(\Phi^{-1}_{p,q})^*\mu_0=:F_{p,q}\mu_0\,,
\]
we obtain that the vector field $X:=F_{p,q}(\Phi_{p,q})_*(X_{p,q})$ is real-analytic, divergence-free and Legendrian, with respect to the standard contact structure. The proposition then follows.
\end{proof}

\begin{remark}
In principle, Moser's path method also provides us with instructions how to find the diffeomorphism $\Phi_{p,q}$. We could then check the divergence of the Legendrian orthogonal field $JX$. If $\Div(JX)=0$, then $(X,JX)$ is a pair of Legendrian fields of Bateman type and hence the topological structure is stable for all~$t$. In practice however, the differential equation that needs to be solved following Moser's approach is hard to analyze, so that at this stage the question whether Seifert foliations other than the Hopf field arise as stable structures in electromagnetic fields remains unsolved.
\end{remark}

\section{New Bateman variables and torus knots}\label{sec:torus}

In this section we construct a pair of Legendrian vector fields of Bateman type on $\SS^3$ such that the electric part realizes all positive torus knots in its set
of periodic orbits. This construction is achieved by introducing a new couple of Bateman variables, i.e., new solutions to the Equation~\eqref{eq:1}, which is presented in Section~\ref{SS.rever}.

\subsection{Antiholomorphic functions and time reversal}\label{SS.rever}
Let $\alpha:\mathbb{R}^{3+1}$ be a complex-valued function in the three spatial variables $x$, $y$ and $z$, and the temporal variable $t$. Then we denote by $\tilde{\alpha}:\mathbb{R}^{3+1}\to\mathbb{C}$ the function that is obtained from $\alpha$ by taking its complex conjugate and reversing time, i.e., $t$ is substituted by $-t$:
\[
\tilde\alpha(x,y,z,t):=\overline{\alpha(x,y,z,-t)}\,.
\]
The following lemma shows that this elementary (albeit nontrivial) transformation preserves the Bateman property.

\begin{lemma}\label{L.newBat}
Let $\alpha$ and $\beta$ be Bateman variables. Then $\tilde{\alpha}$ and $\tilde{\beta}$ as defined above are also Bateman variables.
\end{lemma}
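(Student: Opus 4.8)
The plan is to verify directly that the transformed pair $(\tilde\alpha,\tilde\beta)$ satisfies the Bateman PDE~\eqref{eq:1} by substituting the definition $\tilde\alpha(x,y,z,t)=\overline{\alpha(x,y,z,-t)}$ and tracking how the two operations—complex conjugation and time reversal—act on each term. First I would record the elementary relations between the derivatives of $\tilde\alpha$ and those of $\alpha$: for the spatial gradient, $\nabla\tilde\alpha(x,y,z,t)=\overline{\nabla\alpha(x,y,z,-t)}$, since conjugation commutes with the real spatial derivatives; for the time derivative, the chain rule applied to $t\mapsto -t$ produces a sign, so $\partial_t\tilde\alpha(x,y,z,t)=-\overline{\partial_t\alpha(x,y,z,-t)}$, and similarly for $\tilde\beta$. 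These three observations are the whole arithmetic content of the lemma.

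Next I would substitute these into the left- and right-hand sides of~\eqref{eq:1} written for $(\tilde\alpha,\tilde\beta)$, evaluating everything at $(x,y,z,t)$ and abbreviating the argument of the $\alpha$-terms as $(x,y,z,-t)$. The left-hand side becomes $\nabla\tilde\alpha\times\nabla\tilde\beta=\overline{\nabla\alpha}\times\overline{\nabla\beta}=\overline{\nabla\alpha\times\nabla\beta}$, using that the cross product has real coefficients and hence commutes with entrywise conjugation. The right-hand side is
\begin{align*}
\I\bigl(\partial_t\tilde\alpha\,\nabla\tilde\beta-\partial_t\tilde\beta\,\nabla\tilde\alpha\bigr)
&=\I\bigl(-\overline{\partial_t\alpha}\,\overline{\nabla\beta}+\overline{\partial_t\beta}\,\overline{\nabla\alpha}\bigr)\\
&=-\I\,\overline{\bigl(\partial_t\alpha\,\nabla\beta-\partial_t\beta\,\nabla\alpha\bigr)}\\
&=\overline{\I\bigl(\partial_t\alpha\,\nabla\beta-\partial_t\beta\,\nabla\alpha\bigr)}\,,
\end{align*}
where the crucial step is that pulling the conjugation outside the factor $\I$ turns $\I$ into $-\I$, and this sign is exactly cancelled by the sign coming from the time-reversal in $\partial_t\tilde\alpha$ and $\partial_t\tilde\beta$. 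Thus both sides of~\eqref{eq:1} for the tilded variables are the complex conjugates of the two sides of~\eqref{eq:1} for $(\alpha,\beta)$, evaluated at $(x,y,z,-t)$.

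Finally, since $(\alpha,\beta)$ are assumed to be Bateman variables, the original equation $\nabla\alpha\times\nabla\beta=\I(\partial_t\alpha\,\nabla\beta-\partial_t\beta\,\nabla\alpha)$ holds at every point, in particular at $(x,y,z,-t)$; conjugating this identity shows the two conjugated expressions above agree, which is precisely~\eqref{eq:1} for $(\tilde\alpha,\tilde\beta)$. I do not anticipate a genuine obstacle here—the lemma is a bookkeeping exercise—but the one point that deserves care, and where an error would most naturally creep in, is the interaction of the two sign sources: the factor $\I$ on the right-hand side flips sign under conjugation, while each time derivative flips sign under $t\mapsto -t$; the content of the lemma is precisely that these two effects conspire to cancel, and it is worth displaying that cancellation explicitly rather than asserting it, since reversing time without conjugating (or conjugating without reversing time) would each break the equation.
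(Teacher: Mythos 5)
Your proposal is correct and follows essentially the same route as the paper: a direct substitution into Equation~\eqref{eq:1}, using that spatial gradients commute with conjugation, that time reversal flips the sign of $\partial_t$, and that conjugation flips the sign of $\I$, with the two sign changes cancelling. The paper presents this as a single chain of equalities starting from $\nabla\tilde\alpha\times\nabla\tilde\beta$, whereas you compute both sides separately and match them, but the content is identical.
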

\begin{proof}
By definition of Bateman variables, the spacetime functions $\alpha$ and $\beta$ satisfy Equation~\eqref{eq:1}. We claim that the couple $(\tilde\alpha,\tilde\beta)$ also satisfies Equation~\eqref{eq:1}. Indeed
\begin{align}
\nabla\tilde{\alpha}\times\nabla\tilde{\beta}&=\left(\overline{\nabla\alpha\times\nabla\beta}\right)|_{t\to-t}\nonumber\\
&=-\I \overline{(\partial_t\alpha\nabla\beta-\partial_t\beta\nabla\alpha)}|_{t\to-t}\nonumber\\
&=-\I (\partial_t\overline{\alpha}\nabla\overline{\beta}-\partial_t\overline{\beta}\nabla\overline{\alpha})|_{t\to-t}\nonumber\\
&=\I (\partial_t\tilde{\alpha}\nabla\tilde{\beta}-\partial_t\tilde{\beta}\nabla\tilde{\alpha})\,,
\end{align}
which completes the proof of the lemma.
\end{proof}

Next, we use this lemma to construct new null solutions to Maxwell's equations that are not of Hopf type. We recall that a function $h:U\subset\CC^2\to \CC$ is antiholomorphic if its complex conjugate $\overline{h}$ is holomorphic.

\begin{proposition}\label{P:anti}
Let $(\alpha,\beta)$ be Bateman variables such that the image of $(\overline{\alpha},\overline{\beta})$ is contained (for all $t$) in the image of $(\alpha,\beta)|_{t=0}$. Let $h:U\to\mathbb{C}$ be an antiholomorphic function on some neighbourhood $U$ of the image of $(\alpha,\beta)|_{t=0}$ in $\mathbb{C}^2$. Then $$E_0:=\text{Re}(h(\alpha,\beta)\nabla\alpha\times\nabla\beta)|_{t=0}\,, \qquad B_0:=-\text{Im}(h(\alpha,\beta)\nabla\alpha\times\nabla\beta)|_{t=0}$$
are the initial data of an electromagnetic field that is null for all time.
\end{proposition}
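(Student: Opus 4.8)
The plan is to reduce the antiholomorphic data to genuinely holomorphic Bateman data by exploiting the complex-conjugation/time-reversal symmetry isolated in Lemma~\ref{L.newBat}.

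First, since $h$ is antiholomorphic, its pointwise complex conjugate $\overline h$, i.e.\ the function $z\mapsto\overline{h(z)}$, is holomorphic on $U$. By Lemma~\ref{L.newBat} the pair $(\tilde\alpha,\tilde\beta)$ obtained from $(\alpha,\beta)$ by conjugation and time reversal is again a pair of Bateman variables. The standing hypothesis is used precisely here: the image of $(\tilde\alpha,\tilde\beta)$ equals the image of $(\overline\alpha,\overline\beta)$, which is contained in the image of $(\alpha,\beta)|_{t=0}\subset U$, so $\overline h$ is holomorphic on a neighbourhood of the image of $(\tilde\alpha,\tilde\beta)$. Bateman's recipe then applies and produces a bona fide null solution $\mathcal F:=\overline h(\tilde\alpha,\tilde\beta)\,\nabla\tilde\alpha\times\nabla\tilde\beta$ of Maxwell's equations, valid for all $t$ and automatically divergence-free.

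Next I would transport this solution to $t=0$ in the original variables and read off its initial data. At $t=0$ the conjugation/time-reversal collapses to ordinary conjugation, $\tilde\alpha|_{0}=\overline{\alpha|_{0}}$ and $\tilde\beta|_0=\overline{\beta|_0}$, and, because the spatial gradient commutes with conjugation, $\nabla\tilde\alpha\times\nabla\tilde\beta|_{0}=\overline{\nabla\alpha\times\nabla\beta}\,|_{0}$. The claim to verify is that, after possibly applying the field-level symmetry $F(x,t)\mapsto\overline{F(x,-t)}$ — which again carries null solutions to null solutions and reverses the sign of the magnetic part — the resulting solution has electric field $E_0$ and magnetic field $B_0$ at $t=0$. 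The appearance of $-\text{Im}$ rather than $+\text{Im}$ in the definition of $B_0$ is exactly the signature of this conjugation, and the antiholomorphy of $h$ is what converts $h$ evaluated on $(\alpha,\beta)$ into $\overline h$ evaluated on the conjugated variables.

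The step I expect to be the crux is this final identification of the initial data. One has to keep track of three conjugations at once — the pointwise conjugation turning $h$ into $\overline h$, the conjugation of the arguments forced by $\tilde\alpha|_0=\overline{\alpha|_0}$, and the conjugation of the vector factor $\nabla\alpha\times\nabla\beta$ — and confirm that they combine into the single complex conjugation implicit in the pair $(E_0,B_0)=(\text{Re}\,W,-\text{Im}\,W)$ with $W=(h(\alpha,\beta)\nabla\alpha\times\nabla\beta)|_{0}$. This is where the precise normalization of Bateman's construction and the hypothesis on the image of $(\overline\alpha,\overline\beta)$ must be used carefully; once the initial data are matched, the null property for all time is inherited directly from Bateman's construction together with Lemma~\ref{L.newBat}.
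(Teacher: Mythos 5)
Your proposal is correct and follows essentially the same route as the paper: both reduce to the holomorphic function $\overline{h}$ and the time-reversed, conjugated Bateman variables $(\tilde\alpha,\tilde\beta)$ of Lemma~\ref{L.newBat}, and both identify $E_0+\I B_0=\overline{h(\alpha,\beta)\nabla\alpha\times\nabla\beta}\,|_{t=0}=\overline{h}(\tilde\alpha,\tilde\beta)\nabla\tilde\alpha\times\nabla\tilde\beta\,|_{t=0}$ at $t=0$, concluding by uniqueness of solutions to Maxwell's equations. The ``crux'' you flag is exactly the paper's one-line computation, and no extra field-level symmetry is needed beyond the conjugation already encoded in the sign of $B_0$.
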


\begin{proof}
We first calculate
\begin{align}\label{eq.other}
E_0+\I B_0&=\overline{h(\alpha,\beta)\nabla\alpha\times\nabla\beta}|_{t=0}\nonumber\\
&=\overline{h}(\overline{\alpha},\overline{\beta})\nabla\overline{\alpha}\times\nabla\overline{\beta}|_{t=0}\nonumber\\
&=\overline{h}(\tilde{\alpha},\tilde{\beta})\nabla\tilde{\alpha}\times\nabla\tilde{\beta}|_{t=0}\,,
\end{align}
where $\tilde\alpha$ and $\tilde\beta$ are defined as before. Since $h$ is antiholomorphic on $U$, its conjugate $\overline{h}$ is holomorphic on a neighbourhood of the image of $(\overline{\alpha},\overline{\beta})$, which is the same as the image of $(\tilde{\alpha},\tilde{\beta})$. Since $\tilde{\alpha}$ and $\tilde{\beta}$ are Bateman variables, cf. Lemma~\ref{L.newBat}, the initial datum $E_0+\I B_0$ is equal to the corresponding Bateman field at $t=0$. The proposition then follows from Bateman construction and the uniqueness of solutions.
\end{proof}

\begin{remark}
It is important to emphasize that Proposition~\ref{P:anti} does not contradict Bateman's construction presented in Section~\ref{sec:bateman} (where we required holomorphicity of $h$). It is true that a pair of time dependent vector fields on $\mathbb{R}^3$ describes a Bateman field associated with the Hopf fibration if and only if the corresponding function $h$ on $\SS^3$ satisfies the tangential Cauchy-Riemann equations on $\SS^3$. However, in Proposition~\ref{P:anti} we construct null solutions to Maxwell's equations which are of Bateman type not for $(\alpha,\beta)$ from Equation~\eqref{eq:stereo3}, but for the new Bateman variables $(\tilde\alpha,\tilde\beta)$ (which are not of Hopf type). Accordingly, as shown in Equation~\eqref{eq.other}, it is $\overline{h}$ the function that must be holomorphic, and hence $h$ is antiholomorphic.
\end{remark}

We finally observe that the Bateman pair $(\alpha,\beta)$ introduced in Equation~\eqref{eq:stereo3} (Bateman solution of Hopf type) satisfies the assumption in Proposition~\ref{P:anti} because the image of $(\alpha,\beta)|_{t=t_*}$ is $\SS^3$ for all $t_*\in\mathbb{R}$. It then follows that pairs of Legendrian vector fields on $\SS^3$ whose components with respect to the Hopf basis $\{v_1,v_2\}$ are the real and imaginary part of an antiholomorphic function also correspond to Bateman fields.

\subsection{Torus knots: proof of Proposition~\ref{P:torus}}\label{sec:dense}

In this section we prove Proposition~\ref{P:torus} using the results obtained in the previous section. To this end, we consider the antiholomorphic function $h(z_1,z_2)=\overline{z_1}\overline{z_2}$. The Legendrian vector field $X$ on $\SS^3$ given by $\text{Re}(h(z_1,z_2)(v_1+\I v_2))$ can be explicitly written as
\begin{equation}
X=y_1 (x_2^2+y_2^2)\partial_{x_1}-x_1(x_2^2+y_2^2)\partial_{y_1}-y_2(x_1^2+y_1^2)\partial_{x_2}+x_2(x_1^2+y_1^2)\partial_{y_2}\,,
\end{equation}
where $z_i=x_i+\I y_i$ as usual. It is easy to check that the functions $|z_1|^2$ and $|z_2|^2$ are first integrals of the vector field $X$. Using this observation, it is elementary to compute the integral curve of $X$ through a point $(z_1,z_2)\in\SS^3$, which is given by
\begin{equation}
(z_1\e^{-\I |z_2|^2\tau},z_2\e^{\I |z_1|^2\tau})\,,
\end{equation}
for $\tau\in\RR$.

The field lines of $X$ are tangent to the tori of constant $|z_1|$ (or, equivalently, constant $|z_2|$) in $\SS^3$. The slope of the field lines is given by $-|z_2|^2/|z_1|^2$, so that the field lines are closed if and only if $|z_2|^2/|z_1|^2\in\mathbb{Q}\cup\{\infty\}$. In particular, for $|z_2|^2/|z_1|^2=p/q$ with $p,q\in\mathbb{N}$, all field lines on the corresponding torus form the torus knot $T_{p,-q}$.

Recall that for the Bateman variables $(\alpha,\beta)$ as in Equation~\eqref{eq:stereo3} we denote the inverse of $(\alpha,\beta)|_{t=0}$ by $\phi_0$. The pushforward $E_0$ of $X$ by $\phi_0$ is up to a real factor the real part of $h(\alpha,\beta)(\nabla\alpha\times\nabla\beta)|_{t=0}$. Then, by Proposition~\ref{P:anti}, $E_0$ and $B_0:=-\text{Im}(h(\alpha,\beta)(\nabla\alpha\times\nabla\beta)|_{t=0})$ form the initial datum of a Bateman field with Bateman variables $(\tilde{\alpha},\tilde{\beta})$. In particular, the topological structure of nested tori and the foliation by torus knots on a dense subset of these tori is preserved for all time. Note that $\phi_0$ is orientation reversing, since it is the composition of a stereographic projection and a mirror reflection. Therefore, the negative torus knot $T_{p,-q}$ in $\SS^3$ is transformed into its mirror image $T_{p,q}$ in $\mathbb{R}^3$. This completes the proof of the Proposition.

\section{Invariant magnetic tori: proof of Theorem~\ref{T.torus}}\label{sec:tori}

The construction of Bateman fields with prescribed periodic orbits presented in Section~\ref{sec:kpo} is based on an extension theorem by Burns and Stout~\cite{burns}, which states that a real-analytic complex valued function defined on a Legendrian link $L$ on $\SS^3$ extends to a holomorphic function on some neighbourhood of $\SS^3$. This extension is far from unique, and therefore we can obtain an entire family of Bateman fields with a desired knot $L$ as a periodic orbit. In this section, we show that among this family we can always find a vector field that exhibits a positive volume set of invariant tori that are boundaries of tubular neighbourhoods of $L$. By construction these invariant tori are magnetic surfaces of a null solution of Bateman type. Theorem~\ref{T.torus} then follows noticing that any toroidal surface $T$ in $\SS^3$ is isotopic to the boundary of some knot $L$, which in turn is isotopic to an analytic Legendrian knot.

To this end, we first recall that by Rudolph's theorem~\cite{rudolphtt, rudolphtt2, rudolph} (cf. Section~\ref{SS.tot}) any real-analytic Legendrian link $L$ on $\SS^3$ arises as the totally tangential intersection $F^{-1}(0)\cap\SS^3$ of the vanishing set of a holomorphic function $F:\CC^2\to\mathbb{C}$. This function $F$ is not unique. The following technical lemma shows that the function $F$ can be chosen so that its gradient on $L$ is prescribed. In the proof we use the vector fields
\begin{align*}
&v_3=x_1\partial_{x_1}+y_1\partial_{y_1}+x_2\partial_{x_2}+y_2\partial_{y_2}\,,\\
&v_4=-y_1\partial_{x_1}+x_1\partial_{y_1}-y_2\partial_{x_2}+x_2\partial_{y_2}\,,
\end{align*}
which together with the Legendrian fields $v_1$ and $v_2$ form an orthonormal basis of $T_p\RR^4$ for each point $p\in\SS^3$.


\begin{lemma}\label{lem:rudolph}
Let $L$ be a real-analytic Legendrian link in $\SS^3$. Then there is a holomorphic function $F:U\to\mathbb{C}$ on some neighbourhood $U$ of $\SS^3$ such that $L$ is a component of $F^{-1}(0)\cap\SS^3$, all points on $L$ are tangential intersections, and
$$\nabla\text{Re}(F)(x_1,y_1,x_2,y_2)=(x_1,y_1,x_2,y_2)$$
for all $(x_1,y_1,x_2,y_2)\in L$, where $\nabla$ is the Euclidean gradient with respect to $x_1$,$y_1$,$x_2$ and $y_2$.
\end{lemma}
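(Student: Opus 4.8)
The plan is to begin with Rudolph's theorem, which already produces \emph{some} holomorphic function realizing $L$ as a totally tangential intersection, and then to repair the gradient along $L$ by multiplying by a carefully chosen holomorphic factor. Concretely, I would first apply Rudolph's theorem to obtain a holomorphic $F_0\colon U_0\to\CC$ on a neighbourhood $U_0$ of $\SS^3$ such that $L$ is a component of $F_0^{-1}(0)\cap\SS^3$, all of whose points are tangential intersections, and with $\nabla F_0\neq 0$ along $L$ (the link lies in the smooth part of the complex curve). The key observation is that total tangentiality pins down the \emph{direction} of the gradient: since $F_0$ is holomorphic, the Cauchy--Riemann relations give $\nabla\text{Im}(F_0)=J\,\nabla\text{Re}(F_0)$, so the tangent space to the curve $F_0^{-1}(0)$ at a point $p\in L$ is the orthogonal complement of the $J$-complex line $\Span(\nabla\text{Re}(F_0),J\nabla\text{Re}(F_0))$. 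Requiring this tangent space to equal the contact plane $\xi_p=\Span(v_1,v_2)$ is equivalent to $\Span(\nabla\text{Re}(F_0),J\nabla\text{Re}(F_0))=\Span(v_3,v_4)$, hence forces $\nabla\text{Re}(F_0)|_p\in\Span(v_3,v_4)$. I would then write $\nabla\text{Re}(F_0)|_p=a_3(p)\,v_3+a_4(p)\,v_4$ and set $c(p):=a_3(p)+\I a_4(p)$, which is real-analytic and, because $\nabla F_0\neq 0$ on $L$, nonvanishing along $L$.

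Next I would look for the corrected function in the form $F=G F_0$ with $G$ holomorphic. On $L$ we have $F_0=0$, so immediately $F|_L=0$ and $dF|_L=(G|_L)\,dF_0|_L$, i.e.\ the differential is scaled by the complex number $g(p):=G(p)$. Expanding $\nabla\text{Re}(F)=\text{Re}(g)\,\nabla\text{Re}(F_0)-\text{Im}(g)\,\nabla\text{Im}(F_0)$ and using $\nabla\text{Im}(F_0)=J\nabla\text{Re}(F_0)$ together with $Jv_3=v_4$, the target condition $\nabla\text{Re}(F)|_p=v_3$ (equivalently, the $v_3$-component equals $1$ and the $v_4$-component equals $0$) becomes a real $2\times 2$ linear system in $(\text{Re}\,g,\text{Im}\,g)$ with coefficient matrix built from $a_3,a_4$ and determinant $-|c|^2\neq 0$. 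Its unique solution is $g(p)=1/\overline{c(p)}$, a nonvanishing real-analytic function on $L$. The point worth stressing here is that the single complex degree of freedom provided by $g$ is exactly enough to kill the $v_4$-component and normalise the $v_3$-component simultaneously, and that (via $\nabla\text{Im}(F)=J\nabla\text{Re}(F)=Jv_3=v_4$) fixing $\nabla\text{Re}(F)$ automatically fixes $\nabla\text{Im}(F)$, so no further condition is needed.

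Finally I would invoke the Burns--Stout extension theorem to extend the real-analytic function $1/\overline{c}$ on $L$ to a holomorphic function $G$ on a neighbourhood of $\SS^3$ with $G|_L=1/\overline{c}$. Since $L$ is compact and $1/\overline{c}$ is nonvanishing, $G$ is nonvanishing on a neighbourhood of $L$; shrinking $U\subset U_0$ accordingly, $F=G F_0$ is holomorphic on $U$ with $F^{-1}(0)=F_0^{-1}(0)$ near $L$, so $L$ remains a component of $F^{-1}(0)\cap\SS^3$ whose points are tangential intersections, and by construction $\nabla\text{Re}(F)=v_3=(x_1,y_1,x_2,y_2)$ on $L$. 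I expect the main obstacle to be purely the Step-2/Step-3 bookkeeping: correctly translating the tangentiality condition into the algebraic statement $\nabla\text{Re}(F_0)\in\Span(v_3,v_4)$ and verifying that the holomorphic rescaling acts on the gradient exactly as multiplication by the complex scalar $g$, so that solving for $g=1/\overline{c}$ is legitimate; the analytic input (Rudolph and Burns--Stout) is then applied as a black box.
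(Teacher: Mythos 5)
Your proposal is correct and follows essentially the same route as the paper: start from Rudolph's function, observe that total tangentiality together with the Cauchy--Riemann relations forces $\nabla\mathrm{Re}(F_0)|_L\in\Span(v_3,v_4)$, and then multiply by a Burns--Stout holomorphic extension of the pointwise correction factor, which in the paper is written as $(\lambda_1+\I\lambda_2)/(\lambda_1^2+\lambda_2^2)$ and coincides with your $1/\overline{c}$. The only (harmless) difference is that you spell out the linear-algebra step and the nonvanishing of $G$ near $L$ slightly more explicitly than the paper does.
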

\begin{remark}
Note that the equality $\nabla\text{Re}(F)(x_1,y_1,x_2,y_2)=(x_1,y_1,x_2,y_2)$ for all $(x_1,y_1,x_2,y_2)\in L$ is equivalent to $\tfrac{\partial F}{\partial z_i}|_L=\overline{z_i}$, $i=1,2$.
\end{remark}
\begin{proof}
By Rudolph, since $L$ is real-analytic and Legendrian, there is a holomorphic function $\widetilde{F}:\mathbb{C}^2\to\mathbb{C}$ such that $L$ is the tangential intersection of $\widetilde{F}^{-1}(0)$ and~$\SS^3$. Accordingly, both $\nabla\text{Re}(\widetilde{F})$ and $\nabla\text{Im}(\widetilde{F})$ are in $\text{span}(v_3,v_4)$, i.e., $\nabla\text{Re}(\widetilde{F})=\lambda_1 v_3+\lambda_2 v_4$ and $\nabla\text{Im}(\widetilde{F})=\lambda_1 v_4-\lambda_2 v_3$ (because $\widetilde{F}$ is holomorphic), for some real-analytic couple of functions $(\lambda_1,\lambda_2):L\to \mathbb{R}^2\backslash\{(0,0)\}$.

Now, for every point $p\in L$ we define $g:L\to\mathbb{C}\backslash\{0\}$ as
\begin{equation}
g(p):=\frac{\lambda_1+\I\lambda_2}{\lambda_1^2+\lambda_2^2}\,,
\end{equation}
which is a real-analytic complex valued function on $L$, and by Burns and Stout's theorem, it has a holomorphic extension on some neighbourhood $U$ of $\SS^3$, which we denote by $\widetilde{g}$.

Finally, setting $F:=\widetilde{g}\widetilde{F}$ we easily infer that
\begin{align}
    \nabla\text{Re}(F)(x_1,y_1,x_2,y_2)&=(\text{Re}(\widetilde{g})\lambda_1+\text{Im}(\widetilde{g})\lambda_2)v_3+(\text{Re}(\widetilde{g})\lambda_2-\text{Im}(\widetilde{g})\lambda_1)v_4\nonumber\\
    &=v_3=(x_1,y_1,x_2,y_2)
\end{align}
for all $(x_1,y_1,x_2,y_2)\in L$. Since $L$ is a component of the set of tangential intersections of $F^{-1}(0)$ and $\SS^3$, the lemma follows.
\end{proof}

Let $L$ be a real-analytic Legendrian knot in $\SS^3$. Our goal is to construct a Legendrian field $B_0$ on $\SS^3$ corresponding to the magnetic part of a Bateman type solution (at $t=0$) such that $L$ is an elliptic periodic orbit of $B_0$ with Diophantine frequency~\cite[Section~2]{Chicone}. This means that the monodromy matrix defined by the flow of $B_0$ on $L$ has two purely imaginary eigenvalues $\exp(\pm \I\omega)$ and $\frac{\omega}{2\pi}$ is a Diophantine number. We recall that an irrational number $w$ is Diophantine if there exist constants $\gamma>0$ and $\tau>2$ such that $\Big|w-\frac{p}{q}\Big|\geq \frac{\gamma}{|q|^\tau}$ for any $p\in\ZZ$ and $q\in \ZZ_0$. As we shall see later, this is related to the existence of invariant tori of $B_0$ that accumulate over $L$.

Following Bateman construction used in Section~\ref{sec:kpo}, let $U$ be a neighbourhood of $\SS^3\subset\mathbb{C}^2$ and let $h:U\to \mathbb{C}$ be a holomorphic function such that $L$ is a periodic orbit of the magnetic part $\tilde B_0$ of the corresponding Bateman field, i.e.,
$$\tilde B_0:=\text{Im}(h(v_1+\I v_2))\,,$$
when restricted to $L$, is a unit tangent vector $T$. Let us take a holomorphic function $F:U\to\mathbb{C}$ as in Lemma~\ref{lem:rudolph}, whose zeros intersect $\SS^3\subset\mathbb{C}$ tangentially in $L$ and $\tfrac{\partial F}{\partial z_i}|_L=\overline{z_i}$, $i=1,2$, and an arbitrary holomorphic function $g:U\to\mathbb{C}$. It is clear that the Bateman field corresponding to $h+gF$, which we denote by $B_0$,
\[
B_0:=\text{Im}((h+gF)(v_1+\I v_2))\,,
\]
also has $L$ as a periodic orbit, and the gradient of $h+gF$ on $L$ satisfies
\begin{align}
\nabla \text{Re}(h+gF)&=\nabla\text{Re}(h)+\text{Re}(g)(x_1,y_1,x_2,y_2)_{\circ,p}-\text{Im}(g)(-y_1,x_1,-y_2,x_2)_{\circ,p}\,,\\
\nabla \text{Im}(h+gF)&=\nabla\text{Im}(h)+\text{Im}(g)(x_1,y_1,x_2,y_2)_{\circ,p}+\text{Re}(g)(-y_1,x_1,-y_2,x_2)_{\circ,p}\,.
\end{align}
Here and in what follows we use the notation that for any tangent vector $V=\xi_1\partial_{x_1}+\xi_2\partial_{y_1}+\xi_3\partial_{x_2}+\xi_4\partial_{y_2}$ in $T_p\mathbb{R}^4$ we write $V^{\circ}$ for the point $(\xi_1,\xi_2,\xi_3,\xi_4)\in\mathbb{R}^4$, and to identify a point in $\mathbb{R}^4$ with a tangent vector at a given point $p\in\mathbb{R}^4$ we write $(\xi_1,\xi_2,\xi_3,\xi_4)_{\circ,p}=\xi_1\partial_{x_1}+\xi_2\partial_{y_1}+\xi_3\partial_{x_2}+\xi_4\partial_{y_2}$.

Let $N(L)$ denote a tubular neighbourhood of $L$ in $\SS^3$ and $NL$ its normal bundle in $\SS^3$. Let $N_{\mathbb{R}^4}(L)$ and $N_{\mathbb{R}^4}L$ denote the corresponding objects when $L$ is considered as a submanifold of $\mathbb{R}^4$. Since $L$ is Legendrian, $N_pL$, $p\in L$, has a basis that is given by $v_4$ (the dual to the standard contact form) and a Legendrian orthogonal $Z$ to $L$, given explicitly by $\text{Re}(h(v_1+\I v_2))$ (or, equivalently, $\text{Re}((h+gF)(v_1+\I v_2))$). Likewise, $\{v_4,Z,v_3\}$ forms an orthonormal basis of $N_{\mathbb{R}^4,p}L$, the normal bundle of $L$ in $\mathbb{R}^4$ at $p\in L$.

To obtain the monodromy matrix of the magnetic field $B_0$ on its periodic orbit $L$ we need to compute the Jacobian matrix of $B_0$ on $L$ (actually only its normal component on $NL$ is needed), and to do this we introduce suitable coordinate systems on $N(L)$ and $N_{\mathbb{R}^4}(L)$. First we use the exponential map in $\mathbb{R}^4$ with the Euclidean metric; ultimately, this is not the coordinate system that we are interested in, but it will simplify our calculations quite a bit.

Let $\alpha\in \SS^1_L$ be an arc-length parametrization of the knot $L$ and $\gamma(\alpha)$ an analytic embedding of $\SS^1_L$ into $\SS^3$ whose image is $L$, i.e., $\tfrac{\partial\gamma}{\partial\alpha}=T$. There is no loss of generality in assuming that $|L|=1$. Obviously $\{T,v_4,Z,v_3\}$ forms an orthonormal basis of $T_p\mathbb{R}^4$ for every $p\in L$. Since the geodesics in $\mathbb{R}^4$ are straight lines, the tangent space at a point $p$ in $N_{\mathbb{R}^4}(L)$ has an orthonormal basis given by $$\mathcal{B}=\{V_1,V_2,V_3,V_4\}:=\{T(\alpha),v_4(\alpha),Z(\alpha),v_3(\alpha)\}\,,$$
where $\gamma(\alpha)$ is the closest point to $p$ on $L$. With a slight abuse of notation, in this expression $T(\alpha)$ (at some point $p$ away from the knot $L$) is the parallel transport of $T(\alpha)$ (on the knot) along the unique geodesic in $\mathbb{R}^4$ from $\gamma(\alpha)$ to $p$, and analogously for $v_4(\alpha)$, $Z(\alpha)$ and $v_3(\alpha)$. We emphasize that $v_3(\alpha)$ is normal to $\SS^3$ at $\gamma(\alpha)$, but not normal at other points in $N_{\mathbb{R}^4}(L)$, and $\{T(\alpha),v_4(\alpha),Z(\alpha)\}$ are tangent to $\SS^3$ only at $\gamma(\alpha)$.

Noticing that $V_1\cdot \nabla\big(|h+gF|^2\big)=0$ at each point of $L$ because $|h+gF|^2|_{L}=1$, a straightforward computation (e.g. with the computer software Mathematica) using the Cauchy-Riemann equations and Lemma~\ref{lem:rudolph} yields the following identities at any point on $L$ (parametrized by the $\alpha$-coordinate):
\begin{align}\label{eq:old}
(\partial_{V_2}B_0)\cdot V_2=&0\\
(\partial_{V_2}B_0)\cdot V_3=&1+\text{Re}(g)\text{Re}(h)+\text{Im}(g)\text{Im}(h)\nonumber\\
&+\text{Re}(h)(\nabla\text{Re}(h)\cdot v_3)-\text{Im}(h)(\nabla\text{Re}(h)\cdot v_4)\nonumber\\
=:&G(\alpha)\\
(\partial_{V_3}B_0)\cdot V_2=&-1\\
(\partial_{V_3}B_0)\cdot V_3=&0.\label{eq:old2}
\end{align}
Since $g$ is an arbitrary function and $h|_L\neq 0$, we can choose $G(\alpha)$ to be any real-analytic function on $L$. Here and in what follows, given two vector fields $W_1$ and $W_2$ on $\RR^4$, $\partial_{W_1} W_2$ denotes the vector field $(W_1\cdot \nabla) W_2$, where the action of $W_1\cdot \nabla$ is understood componentwise ($\nabla$ and $\cdot$ are computed using the Euclidean metric in~$\RR^4$).

Next we observe that at each point on $L$ the vectors $-Z$ and $v_4$ form an orthonormal basis of $NL$ at that point. Then this defines coordinates on the normal bundle of $L$ in $\SS^3$ via the identification with $\SS^1\times\mathbb{R}^2$ that maps
\begin{align}\label{eq:basis}
(\alpha,x,y)\in\SS^1\times\mathbb{R}^2&\mapsto(\alpha,-xZ(\alpha),yv_4(\alpha))\,.
\end{align}
Since the exponential map $\exp_{\SS^3}:N_{\SS^3}L\to N_{\SS^3}(L)$ is a diffeomorphism on a neighbourhood of the zero section of the normal bundle, the chosen coordinates on the normal bundle define a set of coordinates on a tubular neighbourhood of $L$ in $\SS^3$. As the geodesics on $\SS^3$ are great circles, the exponential map is explicitly given by
\begin{align}
\exp_{\SS^3}(\alpha,x,y)=&\cos\left(\sqrt{x^2+y^2}\right)\gamma(\alpha)\nonumber\\
&+\frac{\sin\left(\sqrt{x^2+y^2}\right)}{\sqrt{x^2+y^2}}\left(-xZ(\alpha)+yv_4(\alpha)\right)^{\circ}.
\end{align}

In what follows we aim to calculate the pushforwards of $\partial_{\alpha}$, $-Z(\alpha)$ and $v_4(\alpha)$ (the holonomic basis on $N_{\SS^3}L$ associated to the coordinates $(\alpha,x,y)$) by the exponential map, express $B_0$ in terms of these coordinates and compute the derivatives of its components. In order to simplify our calculations and keep working in 4-dimensional Euclidean coordinates, we extend $\exp_{\SS^3}$ to a map $\widetilde{\exp}$ on $N_{\mathbb{R}^4}L$, not just on $NL$. We define
\begin{equation}
\widetilde{\exp}(\alpha,x,y,z):=(z+1)\exp_{\SS^3}(\alpha,x,y)\,.
\end{equation}
Note that for $z=0$, we have the usual exponential map $\exp_{\SS^3}$. Using Mathematica we can calculate the pushforward of $\partial_\alpha$, $-Z(\alpha)$, $v_4(\alpha)$ and $v_3(\alpha)$ by $\widetilde{\exp}$ and obtain a new ordered holonomic basis $\mathcal{B'}=\{w_1,w_2,w_3,w_4\}$, corresponding to new coordinates $(\bar\alpha,\bar x,\bar y,\bar z)$ of $N_{\mathbb{R}^4}(L)$. By construction we have
\[
w_1=\partial_{\bar\alpha}\,,\qquad w_2=\partial_{\bar x}\,,\qquad w_3=\partial_{\bar y}\,,\qquad w_4=\partial_{\bar z}\,,
\]
and $\bar\alpha=\alpha$ on $L$, which in the new coordinates is defined by $\bar x=\bar y=\bar z=0$.

By expressing each $w_i$ as a linear combination of the $V_j$ we define the matrix $M$ that satisfies $\mathcal{B}'=M\mathcal{B}$. For this it is useful to recall that the basis vectors in $\mathcal{B}$ were defined in such a way that on the knot $L$ they form conjugate pairs with respect to the standard complex structure:
\begin{align*}
Z(\alpha)&=-JT(\alpha)\,,\\
v_4(\alpha)&=Jv_3(\alpha)\,,\\
\gamma(\alpha)_{\circ,\gamma(\alpha)}&=v_3(\alpha)\,,
\end{align*}
as well as
\begin{align*}
\frac{\partial v_4^{\circ}(\alpha)}{\partial \alpha}&=\frac{\partial (J(\gamma)_{\circ,\gamma(\alpha)})^{\circ}}{\partial \alpha}&\nonumber\\
&=J\frac{\partial \gamma(\alpha)}{\partial \alpha}=JT(\alpha)=-Z(\alpha)\,.
\end{align*}

We know that $\mathcal{B}$ forms an orthonormal basis of the tangent space of $\mathbb{R}^4$ on $L$. It then follows that
\begin{align}
\frac{\partial Z(\alpha)}{\partial \alpha}\cdot v_4(\alpha)&=\frac{\partial (v_4(\alpha)\cdot Z(\alpha))}{\partial \alpha}-Z(\alpha)\cdot\frac{\partial v_4(\alpha)}{\partial \alpha}\nonumber\\
&=Z(\alpha)\cdot Z(\alpha)=1\,,\\
\frac{\partial Z(\alpha)}{\partial \alpha}\cdot Z(\alpha)&=\frac{1}{2}\frac{\partial (Z(\alpha)\cdot Z(\alpha))}{\partial \alpha}=0\,.
\end{align}
Hence
\begin{equation}
\frac{\partial Z}{\partial \alpha}=v_4(\alpha)+F_1T(\alpha)+F_2v_3(\alpha)
\end{equation}
for some functions $F_1$, $F_2:\SS^1_L\to\mathbb{R}$.

Using these identities we can compute the matrix $M$ (which is invertible) and find that its entries $m_{ij}$, $i,j=1,2,3,4$, satisfy the following properties on the knot, i.e., when $x=y=z=0$:
\begin{align*}
m_{22}&=0\,,\\
m_{23}&=-1\,,\\
m_{32}&=1\,,\\
m_{33}&=0\,,\\
m_{11}&=m_{44}=1\,,
\end{align*}
and all other entries are $0$.

Then we can express the magnetic field in coordinates $(\bar\alpha,\bar x,\bar y,\bar z)$ as $B_0=\sum_{i=1}^4B_{0,i}w_i$ (recall that $\mathcal B'$ forms a holonomic basis in these coordinates), with
$$B_{0,i}=\sum_{j=1}^4\left(M^{-1}\right)_{ji}B_0\cdot V_j\,.$$

In order to compute the first order Taylor expansion of the vector field $B_0$ expressed in the coordinates $(\bar\alpha,\bar x,\bar y,\bar z)$, we need the derivatives $\tfrac{\partial B_{0,i}}{\partial w_j}$ evaluated at $\bar x=\bar y=\bar z=0$ (i.e., on $L$). Note that $w_4$ is not relevant to us, since we know that $B_0$ is tangent to $\SS^3$. We write $c_{ij}$ for the entries of the matrix $\left(M^{-1}\right)_{ij}$.

In general, we have
\begin{equation}\label{eq:B}
\partial_{w_j}B_{0,i}=\sum_{k=1}^4\Big((\partial_{w_j}c_{ki})(B_0\cdot V_k)+c_{ki}(\partial_{w_j}B_0)\cdot V_k+c_{ki}B_0\cdot(\partial_{w_j}V_k)\Big)\,.
\end{equation}
We are only interested in $i,j=2,3$, which corresponds to the normal directions to $L$ in $\SS^3$.
It helps that on $L$ we have
\begin{align*}
B_0\cdot V_1&=B_0\cdot T=1\,,\\
B_0\cdot V_2&=B_0\cdot V_3=B_0\cdot V_4=0\,,
\end{align*}
because $L$ is Legendrian. Therefore, the only relevant derivatives of $c_{ki}$ in Equation~\eqref{eq:B} are (at $\bar x=\bar y=\bar z=0$):
\begin{align*}
\partial_{w_2}c_{12}&=0\,,\\
\partial_{w_3}c_{12}&=-1\,,\\
\partial_{w_2}c_{13}&=1\,,\\
\partial_{w_3}c_{13}&=0\,.
\end{align*}

Furthermore, for $\bar x=\bar y=\bar z=0$
\begin{align*}
c_{11}&=c_{44}=1\,,\\
c_{23}&=1\,,\\
c_{32}&=-1\,,\\
c_{22}&=c_{33}=0
\end{align*}
and all other entries are $0$.

Additionally,
\begin{equation}
\partial_{V_i}V_j=0\qquad \text{ if }i,j\neq 1,
\end{equation}
since $V_j$ is defined by parallel transport along straight lines in $\mathbb{R}^4$ that are normal to $L$.

Consider now the last term of the sum in Equation~\eqref{eq:B}, that is
\begin{align*}
\sum_{k=1}^4c_{ki}B_0\cdot(\partial_{w_j}V_k)&=\sum_{k=1}^4c_{ki}B_0\cdot(\sum_{\ell=1}^4m_{j\ell}\partial_{V_\ell}V_k)\nonumber\\
&=c_{1i}B_0\cdot(m_{j1}\partial_{V_1}V_1)\,,
\end{align*}
which is zero for both $j=2$ or $j=3$ because $m_{21}=m_{31}=0$.

Now note that by definition of the matrix $M$, $\partial_{w_j}B_0=\sum_{i=1}^4m_{ji}\partial_{V_i}B_0$. Using then Equations~\eqref{eq:old}--\eqref{eq:old2}, the values of the derivatives $\partial_{w_j}c_{ki}$ and the entries $c_{ij}$ on $L$, we infer that when $\bar x=\bar y=\bar z=0$ the following identities hold:
\begin{align}
a_{22}:=\partial_{w_2}B_{0,2}=&\sum_{k=1}^4c_{k2}m_{23}(\partial_{V_3}B_0)\cdot V_k\nonumber\\
=&c_{32}m_{23}(\partial_{V_3}B_0)\cdot V_3\nonumber\\
=&0\,,
\end{align}
\begin{align}
a_{23}:=\partial_{w_3}B_{0,2}=&(\partial_{w_3}c_{12})(B_0\cdot V_1)+\sum_{i,k=1}^4c_{k2}m_{3i}(\partial_{V_i}B_0)\cdot V_k\nonumber\\
=&-1+c_{32}m_{32}(\partial_{V_2}B_0)\cdot V_3\nonumber\\
=&-(1+G(\bar\alpha))\,,
\end{align}
\begin{align}
a_{32}:=\partial_{w_2}B_{0,3}=&(\partial_{w_2}c_{13})(B_0\cdot V_1)+\sum_{i,k=1}^4c_{k3}m_{2i}(\partial_{V_i}B_0)\cdot V_k\nonumber\\
=&1+c_{23}m_{23}(\partial_{V_3}B_0)\cdot V_2\nonumber\\
=&2\,,
\end{align}
and
\begin{align}
a_{33}:=\partial_{w_3}B_{0,3}=&\sum_{i,k=1}^4c_{k3}m_{3i}(\partial_{V_i}B_0)\cdot V_k\nonumber\\
=&c_{23}m_{32}(\partial_{V_2}B_0)\cdot V_2\nonumber\\
=&0\,.
\end{align}

Therefore, recalling that the basis $\mathcal B'$ is holonomic, the vector field $B_0$ in the local coordinates $(\bar\alpha,\bar x,\bar y)$ defined in a neighborhood of~$L$ in $\SS^3$ takes the form
\begin{align*}
B_{0,1}(\bar\alpha,\bar x,\bar y)&=1 +h.o.t.\,,\\
B_{0,2}(\bar\alpha,\bar x,\bar y)&=a_{22}\bar x+a_{23}\bar y+h.o.t.\,,\\
B_{0,3}(\bar\alpha,\bar x,\bar y)&=a_{32}\bar x+a_{33}\bar y+h.o.t\,,
\end{align*}
where h.o.t. refers to higher order terms in $\bar x$ and $\bar y$.

It is clear that $L=\{\bar x=\bar y=0\}$ is a periodic orbit of $B_0$, in particular $B_0|_{\bar x=\bar y=0}=\partial_{\bar \alpha}$. To analyze its stability the usual tool is to study the normal variational equation (NVE) on $L$, which is the ODE
\[
\dot\xi =DB_0|_{(t,0,0)}\xi\,,
\]
where $\xi=(\xi_1,\xi_2)$ (cf.~\cite[Section]{Chicone}). Setting the $2\times 2$ matrix
\[
A:=\left(
     \begin{array}{cc}
       a_{22} & a_{23} \\
       a_{32} & a_{33} \\
     \end{array}
   \right)=
   \left(
     \begin{array}{cc}
       0 & -(1+G(t)) \\
       2 & 0 \\
     \end{array}
   \right)\,,
\]
the NVE is given by $\dot\xi=A\xi$. A convenient choice of the function $G$ which allows us to integrate explicitly this ODE is
\[
G(\alpha)=\frac{\omega^2}{2}-1
\]
for some constant $\omega$ that will be fixed later. This yields that the eigenvalues of the matrix $A$ are $\pm\I\omega$. The monodromy matrix $\mathcal M$ is then given~\cite[Section~2]{Chicone} by the fundamental solution $\Phi(t)$ of the NVE evaluated at $t=1$, which is the period of the periodic orbit (recall that we assumed that $|L|=1$). After a straightforward computation we then infer that
\[
\mathcal M=\left(
             \begin{array}{cc}
               \cos(\omega) & -\frac{\omega}{2}\sin(\omega) \\
               \frac{2}{\omega}\sin(\omega) & \cos(\omega) \\
             \end{array}
           \right)\,,
\]
whose eigenvalues are $\exp(\pm\I\omega)$. By definition, these are the Floquet multipliers of the vector field $B_0$ on its periodic orbit $L$, so if we take $\omega$ so that $\frac{\omega}{2\pi}$ is Diophantine, we conclude that $L$ is an elliptic periodic orbit of $B_0$ with Diophantine frequency. In turn, since $B_0$ is analytic and divergence-free, a straightforward application of R\"ussmann's theorem implies that the periodic orbit $L$ is stable, which means that there is a positive volume set of invariant tori of $B_0$ that accumulate on $L$; these tori are the boundaries of tubular neighborhoods of the knot. This completes the proof of Theorem~\ref{T.torus}.

\begin{remark}
R\"ussmann's theorem is stated in terms of local diffeomorphisms of $\RR^2$, so to apply it we need to take an analytic transverse section $\Sigma\cong\RR^2$ at some point on $L$ and the Poincar\'e return map $\Pi:D\subset\Sigma\to \Sigma$ defined by the flow of $B_0$, with $D$ a disk of small radius centered at $0$. Since $L$ is a periodic orbit, the origin is a fixed point of $\Pi$; moreover it is standard that this diffeomorphism is area-preserving and analytic (because $B_0$ is divergence-free and analytic). The fact that $L$ is an elliptic periodic orbit of $B_0$ with Diophantine frequency is transferred to the fixed point $0$ of $\Pi$, and hence we can apply Russmann's theorem.
\end{remark}



\section{Final remarks on integrable Bateman fields}\label{sec:int}

In this final section we explore the integrability properties (i.e., the existence of first integrals) of Bateman solutions of Hopf type. Let us first recall that the Bateman fields in~\cite{kbbpsi}, which contain torus knots, correspond to the holomorphic function $h=pqz_1^{p-1}z_2^{q-1}$, and can be equivalently written as
\begin{equation}
    F=\nabla f(\alpha,\beta)\times\nabla g(\alpha,\beta)\,,
\end{equation}
with $(\alpha,\beta)$ as in Equation~\eqref{eq:stereo3} and $f=z_1^p$ and $g=z_2^q$.

Besides the fact that these fields have periodic orbits in the shape of a $(p,q)$-torus knot, these fields possess a surprising integrability property, that is, $E=\text{Re}(F)$ is tangent to the level sets of $\text{Im}(fg)$ and $B=\text{Im}(F)$ is tangent to the level sets of $\text{Re}(fg)$ (both functions are first integrals of the corresponding fields). This can be proved by a direct calculation. In what follows we want to explain this integrability property in a more general way that relates the electromagnetic fields to the concept of totally tangential $\mathbb{C}$-links.

\subsection{Legendrian fields with prescribed first integrals}

First, we introduce a method (of interest in itself) to construct Legendrian vector fields on $\SS^3$ (with respect to the standard contact structure) with a prescribed first integral.

\begin{lemma}\label{L.fi}
Let $G_{\mathbb{R}}:\SS^3\to\mathbb{R}$ be a smooth function and define the complex-valued function
\begin{equation}\label{eq:htang}
    h(z_1,z_2)=\left(\frac{\partial G_{\mathbb{R}}}{\partial x_2}-\I \frac{\partial G_{\mathbb{R}}}{\partial y_2}\right)(x_1-\I y_1)-\left(\frac{\partial G_{\mathbb{R}}}{\partial x_1}-\I \frac{\partial G_{\mathbb{R}}}{\partial y_1}\right)(x_2-\I y_2)\,.
\end{equation}
Then $G_{\mathbb{R}}$ is a first integral of the Legendrian field $\text{Im}(h(v_1+\I v_2))$.
Furthermore, if $G_{\mathbb{R}}$ extends to a pluriharmonic function on some neighbourhood $U$ of $\SS^3$, i.e., there is a holomorphic function $G:U\to\mathbb{C}$ with $\text{Re}(G)|_{\SS^3}=G_{\mathbb{R}}$, then $\text{Im}(G)|_{\SS^3}$ is a first integral of the Legendrian field $\text{Re}(h(v_1+\I v_2))$.
\end{lemma}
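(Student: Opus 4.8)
The plan is to work entirely in the global frame $\{v_1,v_2\}$ of the contact distribution and to reduce the lemma to a single identity for the complex directional derivative $(v_1+\I v_2)G_{\mathbb{R}}$. The key preliminary observation is that~\eqref{eq:htang} is exactly $h=2\mathbb{L}(G_{\mathbb{R}})$, where $\mathbb{L}=-\overline{z_2}\partial_{z_1}+\overline{z_1}\partial_{z_2}$ is the generator of the complex tangent line from Section~\ref{sec:teo1}: rewriting the bracketed factors in~\eqref{eq:htang} as $2\partial_{z_2}G_{\mathbb{R}}$ and $2\partial_{z_1}G_{\mathbb{R}}$ and the linear factors as $\overline{z_1},\overline{z_2}$, the right-hand side becomes $2(\overline{z_1}\partial_{z_2}G_{\mathbb{R}}-\overline{z_2}\partial_{z_1}G_{\mathbb{R}})$. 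Since a direct check against~\eqref{eq:v1v2} gives $v_1+\I v_2=2\overline{\mathbb{L}}$, and $\overline{\mathbb{L}}G_{\mathbb{R}}=\overline{\mathbb{L}G_{\mathbb{R}}}$ because $G_{\mathbb{R}}$ is real, I expect the clean identity
\[
(v_1+\I v_2)G_{\mathbb{R}}=2\overline{\mathbb{L}}G_{\mathbb{R}}=\overline{h}\,,
\]
which I would verify directly by expanding~\eqref{eq:htang} and~\eqref{eq:v1v2} (equivalently, $v_1(G_{\mathbb{R}})=\text{Re}(h)$ and $v_2(G_{\mathbb{R}})=-\text{Im}(h)$).

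Granting this, the first assertion is immediate. For a real-valued $G_{\mathbb{R}}$ the complex field $h(v_1+\I v_2)$ acts as $\big(h(v_1+\I v_2)\big)G_{\mathbb{R}}=h\cdot(v_1+\I v_2)G_{\mathbb{R}}=h\overline{h}=|h|^2$, and taking real and imaginary parts commutes with evaluation on a real function. Hence $\text{Im}(h(v_1+\I v_2))\,G_{\mathbb{R}}=\text{Im}(|h|^2)=0$, i.e.\ $G_{\mathbb{R}}$ is a first integral of $\text{Im}(h(v_1+\I v_2))$.

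For the second assertion I would feed the Cauchy--Riemann equations of the holomorphic extension $G=G_{\mathbb{R}}+\I\,\text{Im}(G)$ into the very same identity. Writing $G_I:=\text{Im}(G)|_{\SS^3}$ and letting $h'$ denote the function~\eqref{eq:htang} built from $G_I$, the relations $\partial_{x_j}G_{\mathbb{R}}=\partial_{y_j}G_I$ and $\partial_{y_j}G_{\mathbb{R}}=-\partial_{x_j}G_I$ convert each factor $\partial_{x_j}G_{\mathbb{R}}-\I\partial_{y_j}G_{\mathbb{R}}$ into $\I(\partial_{x_j}G_I-\I\partial_{y_j}G_I)$, so that $h=\I h'$ (equivalently $\mathbb{L}G_I=-\I\,\mathbb{L}G_{\mathbb{R}}$). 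Applying the identity of the first step to $G_I$ then gives $(v_1+\I v_2)G_I=\overline{h'}=\overline{-\I h}=\I\overline{h}$, whence $\big(h(v_1+\I v_2)\big)G_I=h\cdot\I\overline{h}=\I|h|^2$ is purely imaginary. Therefore $\text{Re}(h(v_1+\I v_2))\,G_I=\text{Re}(\I|h|^2)=0$, which is the claim.

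The computation is entirely explicit, so I anticipate no conceptual difficulty; the only place demanding genuine care is the bookkeeping of real and imaginary parts when the complex-valued $h$ multiplies the real vector fields $v_1,v_2$. In particular, getting the sign right in $v_1+\I v_2=2\overline{\mathbb{L}}$ (so that $(v_1+\I v_2)G_{\mathbb{R}}=\overline{h}$ rather than $h$) is exactly what forces $h\overline{h}$ to be real in the first part and $\I|h|^2$ to be imaginary in the second, producing the required cancellations.
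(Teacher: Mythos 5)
Your proof is correct, and it is the same computation as the paper's at its core, but organized in a cleaner, more algebraic way. The paper's proof establishes exactly your two scalar identities, namely $v_1(G_{\mathbb{R}})=\mathrm{Re}(h)$ and $v_2(G_{\mathbb{R}})=-\mathrm{Im}(h)$ (its Equations~\eqref{eq.h1}--\eqref{eq.h2}), but then concludes geometrically: it interprets $\mathrm{Im}(h(v_1+\I v_2))$ as the orthogonal projection of $J\nabla G_{\mathbb{R}}$ onto the contact plane, passes to the orthonormal frame $\{v_1,v_2,v_3,v_4\}$ in which $J$ has the standard block form, and reads off that this projection is orthogonal to $\nabla G_{\mathbb{R}}$ (and, in the pluriharmonic case, uses $J\nabla\mathrm{Re}(G)=\nabla\mathrm{Im}(G)$). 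You instead package everything through the tangential CR operator: $h=2\mathbb{L}(G_{\mathbb{R}})$, $v_1+\I v_2=2\overline{\mathbb{L}}$, hence $(v_1+\I v_2)G_{\mathbb{R}}=\overline{h}$, and both assertions reduce to the reality of $h\overline{h}$, respectively the pure imaginarity of $\I|h|^2$. Your version buys two things: it makes manifest that $h$ depends only on $G_{\mathbb{R}}|_{\SS^3}$ (since $\mathbb{L}$ is tangent to $\SS^3$, whereas the ambient partial derivatives in~\eqref{eq:htang} a priori require an extension), and it ties the lemma directly to the operator $\mathbb{L}$ used in Section~\ref{sec:teo1}, recovering~\eqref{eq.fig} as $h=\mathbb{L}G$ for pluriharmonic data. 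Your sign bookkeeping is also the correct one: with the paper's conventions one has $\mathrm{Im}(\overline{\mathbb{L}})=\tfrac12 v_2$ (so $v_1+\I v_2=2\overline{\mathbb{L}}$, not $2\mathbb{L}$), and it is precisely this conjugate that produces $h\overline{h}$ and hence the cancellation.
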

\begin{proof}
Consider the complex structure $J$ on $\mathbb{C}^2\cong\mathbb{R}^4$, which on each real tangent space $T_p\mathbb{R}^4=\mathbb{R}^4$ maps $(a_1,a_2,a_3,a_4)$ to $J(a_1,a_2,a_3,a_4)=(-a_2,a_1,-a_4,a_3)$.
A direct calculation shows that
\begin{align}
\text{Re}(h(z_1,z_2))&=\nabla G_{\mathbb{R}}\cdot v_1=J\nabla G_{\mathbb{R}}\cdot v_2\,,\label{eq.h1}\\
\text{Im}(h(z_1,z_2))&=-\nabla G_{\mathbb{R}}\cdot v_2=J\nabla G_{\mathbb{R}}\cdot v_1\,,\label{eq.h2}
\end{align}
where $\nabla$ denotes the gradient $(\partial_{x_1},\partial_{y_1},\partial_{x_2},\partial_{y_2})$, and $\{v_1,v_2\}$ are the Hopf fields introduced in Section~\ref{sec:bateman}. It follows that
\begin{equation}
    \text{Im}(h(v_1+\I v_2))=(J\nabla G_{\mathbb{R}}\cdot v_1)v_1+(J\nabla G_{\mathbb{R}}\cdot v_2)v_2\,,
\end{equation}
or equivalently, $\text{Im}(h(v_1+\I v_2))$ is the projection of $J\nabla G_{\mathbb{R}}$ into the contact plane at each point of $\SS^3$, spanned by $v_1$ and $v_2$.

Now we change the basis of each tangent space from ${\partial_{x_1}, \partial_{y_1}, \partial_{x_2}, \partial_{y_2}}$ to
\begin{equation}
    \{v_1,v_2,v_3,v_4\}:=\{v_1,v_2,x_1\partial_{x_1}+y_1\partial_{y_1}+x_2\partial_{x_2}+y_2\partial_{y_2},-y_1\partial_{x_1}+x_1\partial_{y_1}-y_2\partial_{x_2}+
    x_2\partial_{y_2}\}\,.
\end{equation}
Note that this defines an orthonormal basis at each point of $\SS^3$, and the change of basis preserves orthogonality. Further note that $J(v_1)=v_2$ and $J(v_3)=v_4$, so that in this new basis $J$ still takes the form $J(a_1,a_2,a_3,a_4)=(-a_2,a_1,-a_4,a_3)$. So if $\nabla G_{\mathbb{R}}=(a_1,a_2,a_3,a_4)$ in this basis, then $J(\nabla G_{\mathbb{R}})=(-a_2,a_1,-a_4,a_3)$ and $\text{Im}(h(v_1+\I v_2))=(-a_2,a_1,0,0)$, which is clearly orthogonal to $\nabla G_{\mathbb{R}}$. Therefore, $G_{\mathbb R}$ is a first integral of the vector field $\text{Im}(h(v_1+\I v_2))$.

If $G_\mathbb{R}$ is the real part of a holomorphic function $G$, Equation~\eqref{eq:htang} becomes
\begin{equation}\label{eq.fig}
    h(z_1,z_2)=2\frac{\partial G}{\partial z_2}\overline{z_1}-2\frac{\partial G}{\partial z_1}\overline{z_2}
\end{equation}
and
\begin{equation}
    J(\nabla G_{\mathbb{R}})=J(\nabla \text{Re}(G))=\nabla\text{Im}(G)\,.
\end{equation}
Therefore, we obtain
\begin{align}
    \text{Re}(h(v_1+\I v_2))\cdot\nabla\text{Im}(G)&=J(\text{Re}(h(v_1+\I v_2)))\cdot J(\nabla\text{Im}(G))\nonumber\\
    &=-\text{Im}(h(v_1+\I v_2))\cdot \nabla(G_{\RR})=0\,,
\end{align}
thus implying that $\text{Im}(G)$ is a first integral of the vector field $\text{Re}(h(v_1+\I v_2))$.
\end{proof}

Next we use this lemma to provide a criterion for the existence of invariant surfaces for Legendrian fields on $\SS^3$. We recall that a level set of a function is regular if the gradient of the function does not vanish at any point of the level set.

\begin{proposition}\label{P.arg}
A Legendrian field $X=\text{Im}(\Th(v_1+\I v_2))$ with $\Th:\SS^3\to\mathbb{C}$, is tangent to the regular level set $M=G_{\mathbb{R}}^{-1}(c)$, $c\in\mathbb{R}$, of a real function $G_{\mathbb{R}}:\SS^3\to\mathbb{R}$ if and only if
\begin{equation}
    \arg(\Th(z_1,z_2))=\arg(h(z_1,z_2))
\end{equation}
for all $(z_1,z_2)\in M$ with $h(z_1,z_2)\neq 0$, where $h$ is given by Equation~\eqref{eq:htang}.
\end{proposition}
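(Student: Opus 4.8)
The plan is to reduce the tangency condition to a pointwise statement about arguments, exploiting the identities already established in Lemma~\ref{L.fi}. Recall that $X=\text{Im}(\Th(v_1+\I v_2))$ is tangent to $M=G_{\mathbb{R}}^{-1}(c)$ precisely when $X\cdot\nabla G_{\mathbb{R}}=0$ at every point of $M$. Since $M$ is a regular level set, $\nabla G_{\mathbb{R}}$ is a nonzero normal vector to $M$, so tangency of $X$ is equivalent to orthogonality $X\cdot\nabla G_{\mathbb{R}}=0$. The first step is therefore to compute $X\cdot\nabla G_{\mathbb{R}}$ explicitly in terms of the components of $\Th$ and the auxiliary function $h$ from Equation~\eqref{eq:htang}.

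\textbf{Expressing everything in the Hopf basis.} I would work in the orthonormal basis $\{v_1,v_2,v_3,v_4\}$ introduced in the proof of Lemma~\ref{L.fi}. Write $\Th=\text{Re}(\Th)+\I\,\text{Im}(\Th)$, so that $X=\text{Im}(\Th(v_1+\I v_2))=\text{Re}(\Th)v_2+\text{Im}(\Th)v_1$. Using the identities~\eqref{eq.h1}--\eqref{eq.h2}, namely $\text{Re}(h)=\nabla G_{\mathbb{R}}\cdot v_1$ and $\text{Im}(h)=-\nabla G_{\mathbb{R}}\cdot v_2$, I can evaluate the inner product: since $\nabla G_{\mathbb{R}}$ pairs with $v_1$ and $v_2$ through exactly the real and imaginary parts of $h$, a direct expansion gives
\[
X\cdot\nabla G_{\mathbb{R}}=\text{Im}(\Th)\,(\nabla G_{\mathbb{R}}\cdot v_1)+\text{Re}(\Th)\,(\nabla G_{\mathbb{R}}\cdot v_2)=\text{Im}(\Th)\,\text{Re}(h)-\text{Re}(\Th)\,\text{Im}(h).
\]
The crucial observation is that this last expression is exactly $-\,\text{Im}(\Th\,\overline{h})=\text{Im}(\overline{\Th}\,h)$, the imaginary part of a product of two complex numbers.

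\textbf{Reading off the argument condition.} At a point of $M$ where $h(z_1,z_2)\neq 0$, the vanishing $\text{Im}(\overline{\Th}\,h)=0$ says precisely that $\overline{\Th}\,h$ is real, i.e. that $\Th$ and $h$ have equal arguments modulo $\pi$. Here I must be slightly careful about the statement: $\text{Im}(\overline{\Th}h)=|\Th||h|\sin(\arg h-\arg\Th)$, which vanishes iff $\arg\Th\equiv\arg h \pmod\pi$; the proposition as stated asserts genuine equality of arguments. I would resolve this either by noting that in the relevant constructions $\text{Re}(\overline{\Th}h)>0$ (so the arguments agree, not merely differ by $\pi$), or by interpreting $\arg$ modulo $\pi$ as is standard for tangency-of-line conditions where only the undirected direction matters. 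The converse direction is immediate by reversing the same chain of equivalences: if $\arg\Th=\arg h$ on $M\cap\{h\neq 0\}$ then $\text{Im}(\overline{\Th}h)=0$ there, hence $X\cdot\nabla G_{\mathbb{R}}=0$, giving tangency.

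\textbf{Main obstacle.} The computation itself is routine given Lemma~\ref{L.fi}; the only genuinely delicate point is the treatment of the zero set $\{h=0\}\cap M$, where the argument of $h$ is undefined. There $X$ is forced to lie along $v_3,v_4$ directions or to vanish, and one must check that tangency to $M$ is automatic (or vacuously imposed) at such points, so that excluding them from the hypothesis does not weaken the equivalence. I expect this set to be lower-dimensional and handled by a continuity argument, but verifying that it does not obstruct the "if" direction is the step that requires care.
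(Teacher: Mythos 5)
Your proof is correct and reaches the same conclusion as the paper, but by a more computational route. The paper argues geometrically: from Equations~\eqref{eq.h1}--\eqref{eq.h2} it first observes that $h(z_1,z_2)=0$ exactly when $\nabla G_{\mathbb{R}}$ is orthogonal to the contact plane, so that \emph{every} Legendrian field is tangent to $M$ at such points; then, where $h\neq 0$, it notes that the regular level set $M$ meets the contact plane in a line, that $\mathrm{Im}(h(v_1+\I v_2))$ spans that line by Lemma~\ref{L.fi}, and hence that a Legendrian field is tangent to $M$ there iff it is a real multiple of $\mathrm{Im}(h(v_1+\I v_2))$. You instead collapse everything into the single identity $X\cdot\nabla G_{\mathbb{R}}=\mathrm{Im}(\Th)\,\mathrm{Re}(h)-\mathrm{Re}(\Th)\,\mathrm{Im}(h)$ and read the tangency condition off as the vanishing of the imaginary part of a product. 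This buys two things: the equivalence in both directions is one line, and the delicate set $\{h=0\}\cap M$ that you flag as your ``main obstacle'' is in fact already disposed of by your own formula --- when $h=0$ the right-hand side vanishes for every $\Th$, so tangency is automatic and no continuity or dimension argument is needed. (This is exactly the paper's first observation, just seen algebraically.) You could therefore delete the final paragraph of your write-up and replace it with that one sentence.

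Two small points. First, a sign: with $\Th=a+\I b$ and $h=c+\I d$ one has $\mathrm{Im}(\Th)\,\mathrm{Re}(h)-\mathrm{Re}(\Th)\,\mathrm{Im}(h)=bc-ad=\mathrm{Im}(\Th\overline{h})=-\mathrm{Im}(\overline{\Th}h)$, so your expression equals $+\mathrm{Im}(\Th\overline{h})$ rather than $-\mathrm{Im}(\Th\overline{h})$; this is immaterial since only its vanishing is used. Second, your observation that the vanishing only forces $\arg\Th\equiv\arg h\pmod{\pi}$ (i.e.\ $\Th$ is a \emph{real}, not necessarily positive, multiple of $h$) is a genuine and correct caveat: the paper's own proof concludes from ``real multiple'' to equality of arguments without comment, so the statement is really about undirected tangency and your mod-$\pi$ reading is the honest one. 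The same caveat applies to points where $\Th$ itself vanishes, where $\arg\Th$ is undefined but tangency is trivial; neither you nor the paper addresses this, and it is equally harmless.
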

\begin{proof}
It follows from Equations~\eqref{eq.h1} and~\eqref{eq.h2} that $h(z_1,z_2)=0$ if and only if $\nabla G_{\mathbb{R}}$ is orthogonal to the contact plane at the point $(z_1,z_2)\in\SS^3$. In particular, any Legendrian vector field (with respect to the standard contact form) is tangent to $M$ at a point $(z_1,z_2)$ if $h(z_1,z_2)=0$. Next we analyze the points $(z_1,z_2)\in M$ such that $h(z_1,z_2)\neq 0$.

We have already seen, cf. Lemma~\ref{L.fi}, that $\text{Im}(h(v_1+\I v_2))$ is tangent to $M$. It then follows that if $\arg(\Th(z_1,z_2))=\arg(h(z_1,z_2))$ for some $(z_1,z_2)\in M$, then $X$ is tangent to $M$ at $(z_1,z_2)$. Conversely, if $h(z_1,z_2)\neq 0$, the intersection of the contact plane at $(z_1,z_2)$ and $T_{(z_1,z_2)}M$ is 1-dimensional (because the level set is regular), so that any Legendrian tangent vector to $M$ has to be a real multiple of $\text{Im}(h(v_1+\I v_2))$ at $(z_1,z_2)$, and hence $\arg(\Th(z_1,z_2))=\arg(h(z_1,z_2))$.
\end{proof}

In particular, the following corollary is straightforward from Bateman construction and Proposition~\ref{P.arg}. It provides a criterion to construct null electromagnetic fields with prescribed invariant surfaces.

\begin{corollary}
Let $G_{\mathbb{R}}:\SS^3\to\mathbb{R}$ be a smooth function. Then there is a Bateman electromagnetic field of Hopf type, whose magnetic part at $t=0$ (when projected onto $\SS^3$) is tangent to the regular level set $M:=G_{\mathbb{R}}^{-1}(c)$, $c\in\RR$, if and only if there is a holomorphic function $\Th:U\to \mathbb{C}$ on some neighbourhood $U$ of $\SS^3$ with
\begin{equation}\label{eq:th}
    \arg(\Th(z_1,z_2))=\arg(h(z_1,z_2))
\end{equation}
for all $(z_1,z_2)\in M$ with $h(z_1,z_2)\neq 0$, where $h$ is given by Equation~\eqref{eq:htang}.
\end{corollary}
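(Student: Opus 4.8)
The plan is to read this corollary as a direct synthesis of Bateman's characterization of Hopf-type fields and Proposition~\ref{P.arg}; no new analytic input should be required. The only point demanding care is to reconcile the two appearances of the symbol $\Th$: in Proposition~\ref{P.arg} it is an arbitrary complex function on $\SS^3$ describing the Legendrian field $\text{Im}(\Th(v_1+\I v_2))$, whereas in the corollary it is a holomorphic function on a neighbourhood $U$ of $\SS^3$. The bridge is Theorem~\ref{P:m1} together with the discussion in Section~\ref{sec:bateman}: the magnetic part at $t=0$ of a Hopf-type Bateman field, projected onto $\SS^3$, is exactly $\text{Im}(\Th|_{\SS^3}(v_1+\I v_2))$ for a function $\Th$ that extends holomorphically to a neighbourhood of $\SS^3$, and conversely every such holomorphic $\Th$ yields a Hopf-type Bateman field via $F=\Th(\alpha,\beta)(\nabla\alpha\times\nabla\beta)$ with $(\alpha,\beta)$ as in~\eqref{eq:stereo3}.

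For the forward implication I would start from a Hopf-type Bateman field whose projected magnetic part is tangent to $M=G_{\mathbb{R}}^{-1}(c)$. By the characterization just recalled, this magnetic field equals $\text{Im}(\Th|_{\SS^3}(v_1+\I v_2))$ for some $\Th$ holomorphic on a neighbourhood $U$ of $\SS^3$, and Proposition~\ref{P.arg} (which imposes no holomorphy on its $\Th$) then forces $\arg(\Th)=\arg(h)$ at every point of $M$ with $h\neq 0$, where $h$ is the function attached to $G_{\mathbb{R}}$ through~\eqref{eq:htang}. This holomorphic $\Th$ is exactly the object whose existence the right-hand side asserts. For the converse I would feed a holomorphic $\Th:U\to\mathbb{C}$ satisfying $\arg(\Th)=\arg(h)$ on $M\setminus\{h=0\}$ into Bateman's recipe, forming the Hopf-type field $F=\Th(\alpha,\beta)(\nabla\alpha\times\nabla\beta)$; since $\Th|_{\SS^3}$ is a CR-function, its projected magnetic part is $\text{Im}(\Th|_{\SS^3}(v_1+\I v_2))$, and Proposition~\ref{P.arg}, run in the opposite direction, guarantees tangency to $M$.

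There is essentially no hard step here, as the entire content has been front-loaded into Proposition~\ref{P.arg} and Theorem~\ref{P:m1}. The one place to stay vigilant is the vanishing locus $\{h=0\}$: equality of arguments is only demanded, and only available, where $h\neq 0$, but Proposition~\ref{P.arg} already records that any Legendrian field is automatically tangent to $M$ wherever $h=0$ (there $\nabla G_{\mathbb{R}}$ is orthogonal to the contact plane). Hence no separate argument is needed to cover those points, and the stated equivalence follows verbatim from the two cited results.
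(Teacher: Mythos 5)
Your proposal is correct and follows exactly the route the paper intends: the paper states the corollary is ``straightforward from Bateman construction and Proposition~\ref{P.arg}'' and gives no further detail, and your argument supplies precisely those details (the correspondence between Hopf-type Bateman fields and holomorphic functions on a neighbourhood of $\SS^3$, plus both directions of Proposition~\ref{P.arg}). Your remark about the locus $\{h=0\}$ being already handled inside Proposition~\ref{P.arg} is accurate and is the only point where vigilance was needed.
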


\subsection{Totally tangential $\CC$-links and Bateman}\label{SS.tot}

Let us start with the following definition:

\begin{definition}[Rudolph \cite{rudolph}]\label{def:tt}
A link $L$ in $\SS^3$ is called a totally tangential $\mathbb{C}$-link if there exists a holomorphic function $G:\mathbb{C}^2\to\mathbb{C}$ such that
\begin{itemize}
    \item $G^{-1}(0)\cap\SS^3=L$.
    \item $G^{-1}(0)\cap \mathring{D}^4=\emptyset$, where $\mathring{D}^4$ denotes the open 4-ball bounded by $\SS^3$ in $\CC^2$.
    \item $L$ is a non-degenerate critical manifold of index 1 of $\rho|_{\text{Reg}(G)}$, the restriction of $\rho(z_1,z_2):=|z_1|^2+|z_2|^2$ to the set of regular points of $G$.
\end{itemize}
\end{definition}
\begin{remark}
It is known that a link in $\SS^3$ is a totally tangential $\mathbb{C}$-link if and only if it is real-analytic and Legendrian~\cite{rudolphtt, rudolphtt2, rudolph}.
\end{remark}

In~\cite{bode} it was suggested that there could be a relation between the holomorphic function $G$ that defines $L$ as an intersection and the holomorphic function $h$, which defines the electromagnetic field containing $L$ (via Bateman construction). Next we shall show that in the case of torus knots~\cite{kbbpsi} we can make this relation precise, in a way that incorporates the integrability of the electromagnetic fields.

We first observe that the polynomial $G$ that defines the torus knot $T_{p,-q}$ as a totally tangential intersection of $G^{-1}(0)$ and $\SS^3$ is given by $$G(z_1,z_2)=\rho z_1^p z_2^q-1$$
for some appropriately chosen constant $\rho>0$ (see~\cite{rudolphtt}). For the class of electromagnetic solutions found in~\cite{kbbpsi}, the following result establishes a connection between the function $G$ and the first integrals of the fields.

\begin{proposition}\label{P.Gh}
The Legendrian field on $\SS^3$ that corresponds to the electric field $E$ in~\cite{kbbpsi} (containing the torus knot $T_{p,q}$ in $\mathbb{R}^3$, cf. Remark~\ref{rem}) is tangent to the level sets of $\text{Im}(G)$, and the Legendrian field corresponding to the associated magnetic field $B$ is tangent to the level sets of $\text{Re}(G)$.
\end{proposition}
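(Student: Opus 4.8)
The plan is to identify the electric and magnetic Legendrian fields of \cite{kbbpsi} with the fields produced by Lemma~\ref{L.fi} applied to the defining polynomial of the totally tangential $\CC$-link. Recall from the beginning of this section that the solutions of \cite{kbbpsi} correspond to the holomorphic function $h_0:=pq\,z_1^{p-1}z_2^{q-1}$, so that the Legendrian field associated with the electric part $E$ is $\text{Re}(h_0(v_1+\I v_2))$ and the one associated with the magnetic part $B$ is $\text{Im}(h_0(v_1+\I v_2))$ (cf. Section~\ref{sec:bateman}). On the other hand, the torus knot $T_{p,-q}\subset\SS^3$ is a totally tangential $\CC$-link defined by $G(z_1,z_2)=\rho z_1^p z_2^q-1$, which is holomorphic, so $G_{\RR}:=\text{Re}(G)$ is pluriharmonic and the second part of Lemma~\ref{L.fi} applies verbatim.

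First I would feed $G_{\RR}=\text{Re}(G)$ into Lemma~\ref{L.fi} and compute the associated function using the holomorphic form~\eqref{eq.fig}. Since $\partial_{z_1}G=\rho p\,z_1^{p-1}z_2^q$ and $\partial_{z_2}G=\rho q\,z_1^p z_2^{q-1}$, a short computation on $\SS^3$ (where $z_i\overline{z_i}=|z_i|^2$) gives
\begin{equation*}
h=2\partial_{z_2}G\,\overline{z_1}-2\partial_{z_1}G\,\overline{z_2}=2\rho\,z_1^{p-1}z_2^{q-1}\bigl(q|z_1|^2-p|z_2|^2\bigr).
\end{equation*}
The crucial observation is then that $h$ is a \emph{real-valued multiple} of $h_0$: indeed $h=c\,h_0$ with $c:=\tfrac{2\rho}{pq}\bigl(q|z_1|^2-p|z_2|^2\bigr)$ a real-analytic real function on $\SS^3$. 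Consequently $\text{Re}(h(v_1+\I v_2))=c\,\text{Re}(h_0(v_1+\I v_2))$, and likewise for the imaginary parts, so the fields of Lemma~\ref{L.fi} and those of \cite{kbbpsi} are everywhere parallel.

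It remains to transfer the tangency. By Lemma~\ref{L.fi}, $\text{Im}(G)$ is a first integral of $\text{Re}(h(v_1+\I v_2))$ and $G_{\RR}=\text{Re}(G)$ is a first integral of $\text{Im}(h(v_1+\I v_2))$. Writing tangency to the regular level sets as orthogonality to the gradient, from $\text{Re}(h(v_1+\I v_2))=c\,\text{Re}(h_0(v_1+\I v_2))$ one gets $c\,\bigl(\text{Re}(h_0(v_1+\I v_2))\cdot\nabla\text{Im}(G)\bigr)=0$ on all of $\SS^3$. The only point requiring care, and the one I would flag as the main (though minor) obstacle, is that $c$ changes sign and vanishes on the Clifford-type torus $\{q|z_1|^2=p|z_2|^2\}$, so one cannot simply divide by $c$. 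However, the function $\text{Re}(h_0(v_1+\I v_2))\cdot\nabla\text{Im}(G)$ is real-analytic and vanishes on the dense open set $\{c\neq0\}$, hence it vanishes identically. This proves that the electric Legendrian field is tangent to the level sets of $\text{Im}(G)$; the identical argument with $\text{Re}(G)$ and $\text{Im}(h_0(v_1+\I v_2))$ handles the magnetic field, completing the proof.

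As a consistency check, since $G=\rho fg-1$ with $f=z_1^p$ and $g=z_2^q$, the level sets of $\text{Im}(G)$ and $\text{Re}(G)$ coincide with those of $\text{Im}(fg)$ and $\text{Re}(fg)$, which recovers the integrability property stated by direct calculation at the opening of this section and exhibits it as a special instance of the general mechanism of Lemma~\ref{L.fi}.
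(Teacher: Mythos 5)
Your proof is correct and follows essentially the same route as the paper: apply Lemma~\ref{L.fi} to $G=\rho z_1^pz_2^q-1$ via Equation~\eqref{eq.fig} and observe that the resulting $h$ is a real-valued multiple of $pq\,z_1^{p-1}z_2^{q-1}$. If anything, you treat the vanishing of the real factor on the torus $\{q|z_1|^2=p|z_2|^2\}$ more carefully than the paper, which simply says ``ignoring the real factor''; your density-plus-analyticity argument cleanly justifies that step.
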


\begin{proof}
Indeed, the Bateman fields constructed in~\cite{kbbpsi} use the complex polynomial
$$\tilde h(z_1,z_2)=pqz_1^{p-1}z_2^{q-1}\,,$$
while the $(p,q)$-torus knot arises as a totally tangential $\mathbb{C}$-link for the function $G$ introduced before. In order to construct a pair of Legendrian fields on $\SS^3$ for which $\text{Re}(G)$ and $\text{Im}(G)$ are first integrals, we use Lemma~\ref{L.fi} and define $h$ as in Equation~\eqref{eq.fig}, resulting in
\begin{equation}
h(z_1,z_2)=2\rho q z_1^p z_2^{q-1}\overline{z_1}-2\rho p z_1^{p-1}z_2^q\overline{z_2}=pq z_1^{p-1}z_2^{q-1}\Big(2\rho\frac{1}{p}|z_1|^2-2\rho\frac{1}{q}|z_2|^2\Big)\,.
\end{equation}
Ignoring the real factor $2\rho\frac{1}{p}|z_1|^2-2\rho\frac{1}{q}|z_2|^2$ we are left with the same holomorphic function $\tilde h$ as in~\cite{kbbpsi}. Accordingly, up to a real factor, the electric and magnetic parts of the Bateman field~\cite{kbbpsi}, when projected onto $\SS^3$, are tangent to the Legendrian fields obtained in Lemma~\ref{L.fi} using the function $h$. We then conclude that the magnetic and electric lines are tangent to the level sets of the real and imaginary part of $G$, respectively, as we wanted to prove.
\end{proof}

\begin{remark}
There is a topological obstruction regarding the link types that can arise as periodic orbits of non-vanishing vector fields with analytic first integrals. This class of links is called zero-entropy links and has been classified by Etnyre and Ghrist, following previous work by Wada~\cite{wada,EG}. Therefore, since $G$ is a holomorphic function, if the construction above generalizes to other link types, it must produce Bateman fields with zeros in the case of links that do not belong to the family of zero-entropy links.
\end{remark}

To conclude this section we want to emphasize that it would be interesting to see if the construction in Proposition~\ref{P.Gh} generalizes to other holomorphic functions $G$, resulting in other Bateman type solutions exhibiting first integrals. However, at this stage it remains difficult to determine the existence of a holomorphic function $\Th$ satisfying Equation~\eqref{eq:th}. In fact, the torus knots above are the only examples where the function $G$ is known explicitly (although it exists for every link type). Likewise, we know that there exist Bateman fields that realize electric and magnetic lines of any link type, but explicit expressions have only been found for torus links~\cite{kbbpsi}.

\section*{Acknowledgements}

The authors are grateful to R. Casals and A. del Pino for useful comments concerning Legendrian fields. B.B. is supported by the European Union's Horizon 2020 research and innovation programme through the Marie Sklodowska-Curie grant agreement 101023017.
This work is supported by the grants CEX2019-000904-S and PID2019-106715GB GB-C21 funded by MCIN/AEI/10.13039/501100011033.

\bibliographystyle{amsplain}

\end{document}